\def\tank#1{\protected@xdef\@thanks{\@thanks
        \protect\footnotetext[0]{#1}}}
\def\bigfoot{

    \@footnotetext}
\newcommand{\ea}{\end{array}}
\newtheorem{theorem}{Theorem}[section]
\newtheorem{claim}{Claim}[section]
\newtheorem{lemma}{Lemma}[section]
\newtheorem{definition}{Definition}[section]
\newtheorem{remark}{Remark}[section]
\newenvironment{proof}{Proof.}
\def \eref#1{\hbox{(\ref{#1})}}
\begin{document}
\title{\Large \bf Large Deviations for Stochastic Porous Media Equation on General
Measure Spaces\thanks{Weina Wu's research is supported by National Natural Science Foundation of China (NSFC) (No. 11901285, 11771187), China Scholarship Council (CSC) (No. 202008320239), School Start-up Fund of Nanjing University of Finance and Economics (NUFE), Support Programme for Young Scholars of NUFE.
Jianliang Zhai's research is supported by National Natural Science Foundation of China (NSFC) (No. 11971456, No. 11671372, No. 11721101), School Start-up Fund (USTC) KY0010000036.} }

\author{{Weina Wu}$^a$\footnote{E-mail:wuweinaforever@163.com}~~~ {Jianliang Zhai}$^b$\footnote{E-mail:zhaijl@ustc.edu.cn}
\\
 \small  a. School of Economics, Nanjing University of Finance and Economics, Nanjing, Jiangsu 210023, China.\\
 \small  b.  CAS Wu Wen-Tsun Key Laboratory of Mathematics, School of Mathematical Sciences,\\
 \small University of Science and Technology of China, Hefei, Anhui 230026, China.}\,
 %University of Science and Technology of China, Hefei, Anhui, 230026, PR China
\date{}
\maketitle

\begin{center}
\begin{minipage}{130mm}
{\bf Abstract.} In this paper, we establish a large deviation principle for stochastic porous media equations driven by time-dependent multiplicative noise on a $\sigma$-finite measure space $(E,\mathcal{B}(E),\mu)$, and with the Laplacian replaced by a negative definite self-adjoint operator. The coefficient $\Psi$ is only assumed to satisfy the increasing Lipschitz nonlinearity assumption without the restriction $r\Psi(r)\rightarrow\infty$ as $r\rightarrow\infty$ for $L^2(\mu)$-initial data. This paper also gets rid of the compact embedding assumption on the associated Gelfand triple. Examples of the negative definite self-adjoint operators include fractional powers of the Laplacian, i.e. $L=-(-\Delta)^\alpha,\ \alpha\in(0,1]$, generalized Schr\"{o}dinger operators, i.e. $L=\Delta+2\frac{\nabla \rho}{\rho}\cdot\nabla$, and Laplacians on fractals.

%Another merit of this paper is to get rid of the compact embedding assumption on the associated Gelfand triple.

\vspace{3mm} {\bf Keywords:} Porous media equation;
Sub-Markovian; Strongly continuous contraction semigroup; Weak convergence method; Large deviations.

\end{minipage}
\end{center}

\section{Introduction}
\setcounter{equation}{0}
 \setcounter{definition}{0}

Let $(E,\mathcal{B}(E),\mu)$ be a standard measurable space (\cite[page:133, Definition 2.2]{P67}) with a $\sigma$-finite measure. Let $W$ be an $L^2(\mu)$-valued cylindrical Wiener process defined on a given complete filtered probability space $(\Omega,\mathcal{F},\mathbb{F},\Bbb{P})$, where $\mathbb{F}:=\{\mathcal{F}_t\}_{t\geq0}$, satisfying the usual condition. The intention of this paper is to prove a large deviation principle (LDP) for the following stochastic generalized porous media equations with small noise:
%\begin{equation} \label{eq:1}
%dX^\varepsilon(t)-L\Psi(X^\varepsilon(t))dt= \sqrt{\varepsilon}B(t,X^\varepsilon(t))dW(t),\ \text{in}\ [0,T]\times E,\ \varepsilon>0,
%\end{equation}
\begin{equation} \label{eq:1}
\left\{ \begin{aligned}
&dX^\varepsilon(t)-L\Psi(X^\varepsilon(t))dt= \sqrt{\varepsilon}B(t,X^\varepsilon(t))dW(t),\ \text{in}\ [0,T]\times E,\ \varepsilon>0,\\
&X^\varepsilon(0)=x \text{~on~} E,
\end{aligned} \right.
\end{equation}
where $L$ is the infinitesimal generator of a symmetric sub-Markovian strongly continuous contraction semigroup $(P_t)_{t\geq0}$ on $L^2(\mu)$.
$\Psi(\cdot):\Bbb{R}\rightarrow\Bbb{R}$ is a monotonically nondecreasing Lipschitz continuous function, $B$ is measurable and takes values in the space of Hilbert-Schmidt operators $L_2(L^2(\mu),F^*_{1,2})$. For the definition of the Hilbert space
$F^*_{1,2}$ and the precise conditions on $\Psi$ and $B$ we refer to Section 2 and Section 3 respectively.

The study of the asymptotic behavior of stochastic porous media equations is one of the most important problems of modern mathematical physics. One way to investigate the problem is to consider its LDPs.
In \cite{L,RWW}, LDPs for \eref{eq:1} were studied in the following framework: Let $(E,\mathcal{B}(E),\mu)$ be a separable probability space, $L$ a negative definite self-adjoint linear operator on $L^2(\mu)$ such that $L^{-1}$ is bounded on $L^{r+1}(E,\mathcal{B}(E),\mu)$, for some $r>1$. In \cite{RWW}, the authors used Schilder's theorem for Gaussian processes and approximation procedures to establish LDPs for (\ref{eq:1}) with additive noise. By applying a weak convergence method, LDPs for stochastic partial differential equations with general monotone drift driven by multiplicative Gaussian noises and L\'evy noises were obtained in \cite{L,XZ} respectively. However, these results do not apply to our framework, since $(E,\mathcal{B}(E),\mu)$ is assumed to be a $\sigma$-finite measure space.
In this paper, the existence and uniqueness of solutions to \eref{eq:1} is based on the recent paper \cite{RWX}, more precisely, we consider \eref{eq:1} under the Gelfand triple $L^2(\mu)\subset F^*_{1,2}\subset (L^2(\mu))^*$, which avoids the assumption of compactness of embedding in the corresponding Gelfand triple, see \cite[pg:52]{L} and \cite[pg:2858]{XZ}. In addition, we keep the assumptions for $L$, $\Psi$ and $B$ as in \cite{RWX}. Hence, the examples given in \cite{RWX} also apply here, furthermore, our $L$ can cover all examples mentioned in \cite{RWX}, such as generalized Schr\"odinger operators, i.e., $L=\Delta+2\frac{\nabla \rho}{\rho}\cdot\nabla$, fractional powers of the Laplacian, i.e., $L=-(-\Delta)^\alpha$, $\alpha\in(0,1]$, and Laplacians on fractals. In particular, the result in \cite{RWX} generalizes that in \cite{BRR}, since they do not need the restriction on $d$ when $E=\mathbb{R}^d$ and $L=-(-\Delta)^{\alpha},\ \alpha\in(0,1]$. We would also like to mention that in \cite{RRW} and \cite{RW}, $\Psi$ is assumed to be continuous such that $r\Psi(r)\rightarrow\infty$ as $r\rightarrow\infty$, which is not needed for $L^2(\mu)$-initial data in \cite{RWX} and this paper.

To obtain a LDP for \eref{eq:1}, our method is based on a weak convergence approach introduced by \cite{BD}, which has been applied to various dynamical systems driven by Gaussian noises, see e.g. \cite{BD1, BDM1, BDM2, BGJ, CM, CR, L, RZ, RZZ}. Recently, a sufficient condition to verify the large deviation criteria of Budhiraja-Dupuis-Maroulas has been improved by Matoussi, Sabbagh and Zhang in \cite{MSZ}. This condition seems to be more suitable to deal with SPDEs arising from fluid mechanics; see e.g.\cite{DWZZ}. In this paper we will use this method. The main point of our procedures is to prove the convergence of some skeleton equations. Before this, we state the results on existence, uniqueness and provide some a priori estimates for the solutions to the skeleton equations. The corresponding proof is quite involved and we do not include the details to keep down its size.

Finally, we would like to refer \cite{LR, P, PR} for more background information and results on SPDEs, \cite{A, BDR} on SPMEs, \cite{RRW, RW, RWW, RWX} and the references therein for comprehensive theories of stochastic generalized porous media equations.

The paper is organised as follows. In Section 2, we introduce some notations and preliminaries. Hypothesis and main result will be given in Section 3. In Section 4, we prove the existence and uniqueness of solutions to the associated skeleton equations. The large deviation principle is proved in Section 5.

\smallskip

\section{Notations and Preliminaries}
\setcounter{equation}{0}
 \setcounter{definition}{0}

First of all, let us recall some basic definitions and spaces which will be used throughout the paper (see \cite{RWX}).

Let $(E,\mathcal{B}(E),\mu)$ be a $\sigma$-finite measure space, $(P_t)_{t\geq0}$ be a strongly continuous, symmetric sub-Markovian
contraction semigroup on $L^2(\mu):=L^2(E,\mathcal{B}(E),\mu)$ with generator $(L, D(L))$. The $\Gamma$-transform of $(P_t)_{t\geq0}$ is defined by the following Bochner integral (\cite{HK})
\begin{eqnarray}\label{eqnarray1}
V_ru:=\frac{1}{\Gamma(\frac{r}{2})}\int_0^\infty t^{\frac{r}{2}-1}e^{-t}P_tudt,~~u\in L^2(\mu),~~r>0.
\end{eqnarray}
In this paper, we consider the Hilbert space $(F_{1,2},\|\cdot\|_{F_{1,2}})$ defined by
$$
F_{1,2}:=V_1(L^2(\mu)),~\text{with~norm}~\|f\|_{F_{1,2}}=|u|_2,~~\text{for}~~f=V_1u,~~ u\in L^2(\mu),
$$
where the norm $|\cdot|_2$ is defined as $|u|_2=(\int_E |u|^2d\mu)^{\frac{1}{2}}$.
From \cite{FJS1,FJS2}, we know
$$
V_1=(1-L)^{-\frac{1}{2}},~~\text{so~that}~~F_{1,2}=D\big((1-L)^{\frac{1}{2}}\big)~~\text{and}~~\|f\|_{F_{1,2}}=|(1-L)^{\frac{1}{2}}f|_2.
$$
The dual space of $F_{1,2}$ is denoted by $F^*_{1,2}$ and $F^*_{1,2}=D((1-L)^{-\frac{1}{2}})$, it is equipped with norms
\begin{eqnarray}\label{eqnarray5}
\|\eta\|_{F^*_{1,2,\nu}}:=\langle \eta, (\nu-L)^{-1}\eta\rangle_2^{\frac{1}{2}},~~\eta\in F^*_{1,2},~~0<\nu<\infty.
\end{eqnarray}

Let $H$ be a separable Hilbert space with inner product $\langle\cdot,\cdot\rangle_H$ and $H^*$ its dual. Let $V$ be a reflexive Banach space such that $V\subset H$ continuously and densely. Then for its dual space $V^*$ it follows that $H^*\subset V^*$ continuously and densely. Identifying $H$ and $H^*$ via the Riesz isomorphism we have that
$$V\subset H\subset V^*$$
continuously and densely. If $_{V^*}\langle\cdot,\cdot\rangle_V$ denotes the dualization between $V^*$ and $V$ (i.e. $_{V^*}\langle z,v\rangle_V:=z(v)$ for $z\in V^*, v\in V$), it follows that
\begin{eqnarray}\label{eqn6}
_{V^*}\langle z,v\rangle_V=\langle z,v\rangle_H,~~\text{for~all}~z\in H,~~v\in V.
\end{eqnarray}
$(V,H,V^*)$ is called a Gelfand triple.

In \cite{RWX}, the authors constructed a Gelfand triple with $V=L^2(\mu)$ and $H=F^*_{1,2}$, the Riesz map which identifies $F_{1,2}$ and $F^*_{1,2}$ is $(1-L)^{-1}: F^*_{1,2}\rightarrow F_{1,2}$.

We need the following lemma which was proved in \cite{RWX}.
\begin{lemma}
The map
$$1-L:F_{1,2}\rightarrow F_{1,2}^*$$
extends to a linear isometry
$$1-L:L^2(\mu)\rightarrow(L^2(\mu))^*,$$
and for all $u,v\in L^2(\mu)$,
\begin{eqnarray}\label{eqn7}
_{(L^2(\mu))^*}\langle(1-L)u, v\rangle_{L^2(\mu)}=\int_Eu\cdot
v~d\mu.
\end{eqnarray}
\end{lemma}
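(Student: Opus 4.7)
The plan is first to establish identity (\ref{eqn7}) on the dense subspace $F_{1,2}=D((1-L)^{1/2})$ of $L^2(\mu)$ using the Gelfand-triple relation (\ref{eqn6}) together with the functional calculus for $1-L$, and then to conclude the isometric extension by density and continuity. The essential inputs are that $(1-L)^{1/2}$, being self-adjoint on $L^2(\mu)$, has dense domain $F_{1,2}$, and that the inner product on $F^*_{1,2}$ is induced by the Riesz isomorphism $(1-L)^{-1}:F^*_{1,2}\to F_{1,2}$.

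For the central computation I will fix $u\in F_{1,2}$ and $v\in L^2(\mu)$. Since $(1-L)u\in F^*_{1,2}=H$, relation (\ref{eqn6}) with $V=L^2(\mu)$ yields
$$
{}_{(L^2(\mu))^*}\langle (1-L)u,v\rangle_{L^2(\mu)} = \langle (1-L)u,v\rangle_{F^*_{1,2}}.
$$
The Riesz isomorphism $(1-L)^{-1}:F^*_{1,2}\to F_{1,2}$ sends $(1-L)u$ to $u$ and sends $v\in L^2(\mu)\subset F^*_{1,2}$ to $(1-L)^{-1}v\in F_{1,2}$, so unfolding the definition of the $F_{1,2}$-inner product gives
$$
\langle (1-L)u,v\rangle_{F^*_{1,2}} = \langle u,(1-L)^{-1}v\rangle_{F_{1,2}} = \langle (1-L)^{1/2}u,(1-L)^{-1/2}v\rangle_2.
$$
Moving $(1-L)^{1/2}$ across by self-adjointness of the square root then produces $\langle u,v\rangle_2=\int_E u\cdot v~d\mu$, which is precisely (\ref{eqn7}) restricted to $u\in F_{1,2}$.

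Once this identity is in hand, the Riesz representation on $L^2(\mu)$ immediately gives $\|(1-L)u\|_{(L^2(\mu))^*}=|u|_2$ for $u\in F_{1,2}$, so $1-L$ is a $|\cdot|_2$-to-$\|\cdot\|_{(L^2(\mu))^*}$ isometry on the dense subspace $F_{1,2}\subset L^2(\mu)$. It therefore extends uniquely by continuity to an isometry $1-L:L^2(\mu)\to(L^2(\mu))^*$, and (\ref{eqn7}) passes to the limit along approximating sequences $u_n\in F_{1,2}$ with $u_n\to u$ in $L^2(\mu)$, since both sides are continuous in $u$ for fixed $v$. The only delicate point will be the central computation above: it requires coordinating three different pairings---the $L^2$ inner product, the $F^*_{1,2}$-inner product built from the Riesz map $(1-L)^{-1}$, and the Gelfand-triple dualization---through the functional calculus for $(1-L)^{1/2}$. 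Everything else is density and continuity.
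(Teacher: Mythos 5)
Your argument is correct and is essentially the proof used in the cited reference \cite{RWX}; the paper itself gives no proof of this lemma, only citing that reference. The chain of equalities through \eref{eqn6}, the Riesz-map description of $\langle\cdot,\cdot\rangle_{F^*_{1,2}}$ together with $\langle f,g\rangle_{F_{1,2}}=\langle(1-L)^{1/2}f,(1-L)^{1/2}g\rangle_2$, and self-adjointness of $(1-L)^{1/2}$ --- noting that both $u$ and $(1-L)^{-1/2}v=V_1v$ lie in $D((1-L)^{1/2})=F_{1,2}$ so the shift is legitimate --- yields \eref{eqn7} on the dense set $F_{1,2}$, from which the isometry and the extension by density follow exactly as you say.
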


\vskip 0.2cm
Now, some basic notations and definitions of large deviations need to be presented.

\vspace{2mm}

Let $\{\Gamma^\varepsilon\}_{\varepsilon>0}$ be a family of random variables defined on a given probability space $(\Omega, \mathcal{F},\Bbb{P})$ taking values in some Polish space $\mathcal{E}$. Let $\mathcal{B}(\mathcal{E})$ denotes the Borel $\sigma$-field of $\mathcal{E}$.
\vspace{2mm}
\begin{definition}
A function $I:\mathcal{E}\rightarrow[0,\infty]$ is called a rate function if $I$ is lower semi-continuous. A rate function $I$ is called a good rate function if the level set $\{e\in\mathcal{E}:I(e)\leq M\}$ is a compact subset of $\mathcal{E}$ for each $M<\infty$.
\end{definition}

\begin{definition}
Let $I$ be a rate function. The sequence $\{\Gamma^{\varepsilon}\}_{\varepsilon>0}$ is said to satisfy a LDP on $\mathcal{E}$ with rate function $I$ if for each Borel subset $\mathcal{O}$ of $\mathcal{E}$
$$-\inf_{e\in\mathcal{O}^0} I(e)\leq\lim\inf_{\varepsilon\rightarrow0}\varepsilon\log\Bbb{P}(\Gamma^{\varepsilon}\in \mathcal{O})\leq\lim\sup_{\varepsilon\rightarrow0}\varepsilon\log\Bbb{P}(\Gamma^{\varepsilon}\in \mathcal{O})\leq-\inf_{e\in\overline{\mathcal{O}}}I(e),$$
where $\mathcal{O}^0$ and $\overline{\mathcal{O}}$ denote the interior and closure of $\mathcal{O}$ in $\mathcal{E}$, respectively.
\end{definition}
\smallskip

Thoughout the paper, let $L^2([0,T]\times\Omega;L^2(\mu))$ denote
the space of all $L^2(\mu)$-valued square-integrable functions on
$[0,T]\times\Omega$, and $C([0,T];F^*_{1,2})$ the space of all
continuous $F^*_{1,2}$-valued functions on $[0,T]$. For two
Hilbert spaces $H_1$ and $H_2$, the space of Hilbert-Schmidt
operators from $H_1$ to $H_2$ is denoted by $L_2(H_1, H_2)$.
For any $J\in L_2(H_1, H_2)$ and $h_1\in H_1$, we use the symbol ``$J\circ h_1$" to
denote the value of the Hilbert-Schmidt
operator $J$ acting on $h_1$, and it is easy to see that $J\circ h_1\in H_2$.
 For
simplicity, the positive constants $c$, $C$, $C_i$, $i=1,2,3$ and $C_{T,M}$ used in this paper may change from line to line.

\section{Hypothesis and main results}
\setcounter{equation}{0}
 \setcounter{definition}{0}

In this paper, we study \eref{eq:1} with the following hypotheses:

\medskip
\noindent \textbf{(H1)} $\Psi(\cdot):\Bbb{R}\rightarrow \Bbb{R}$ is a monotonically nondecreasing Lipschitz function with $\Psi(0)=0$.

\vspace{1mm}
\medskip
\noindent \textbf{(H2)} $B(t,u):[0,T]\times L^2(\mu)\rightarrow L_2(L^2(\mu),L^2(\mu))$ satisfies
\vspace{2mm}

\noindent \textbf{(i)} there exists $C_1\in[0, \infty)$ such that
$$\|B(t, u)-B(t, v)\|_{L_2(L^2(\mu), F^*_{1,2})}\leq C_1\|u-v\|_{ F^*_{1,2}}\ \ \text{for\ all}\ t\in[0,T]\ \text{and}\ u, v\in L^2(\mu);$$

\noindent \textbf{(ii)} there exists $C_2\in(0, \infty)$ such that
$$\|B(t, u)\|_{L_2(L^2(\mu), F^*_{1,2})}\leq C_2(\|u\|_{ F^*_{1,2}}+1)\ \ \text{for\ all}\ t\in[0,T]\ \text{and}\ u\in L^2(\mu);$$

\noindent \textbf{(iii)} there exists $C_3\in(0, \infty)$ such that
$$\|B(t, u)\|_{L_2(L^2(\mu), L^2(\mu))}\leq C_3(|u|_2+1)\ \ \text{for\ all}\ t\in[0,T]\ \text{and}\ u\in L^2(\mu).$$

\vspace{1mm}
\medskip
\noindent \textbf{(H3)} There exists $\gamma>0$ such that
\begin{eqnarray*}\label{30}
% \nonumber to remove numbering (before each equation)
\|B(t_1,u)-B(t_2,u)\|_{L_2(L^2(\mu),F^*_{1,2})}\leq C(\|u\|_{F^*_{1,2}}+1)|t_1-t_2|^{\gamma}~ \text{for~all}~u\in L^2(\mu),~ t_1, t_2\in [0,T].
\end{eqnarray*}

\vspace{2mm}
According to \cite[Theorem 3.1]{RWX}, we can instantly obtain the following theorem.
\begin{theorem}\label{Th 3.1}
Suppose that \textbf{(H1)} and \textbf{(H2)} hold. Then, for each $x\in L^2(\mu)$, there exists a unique strong solution $X^\varepsilon$ to \eref{eq:1}, i.e., $X^\varepsilon$ is an $F^*_{1,2}$-valued $\{\mathcal{F}_t\}_{t\geq0}$-adapted process, and the following conditions are satisfied:
\begin{eqnarray*}
% \nonumber to remove numbering (before each equation)
X^\varepsilon\in L^2(\Omega,C([0,T];F_{1,2}^*))\cap L^2([0,T]\times\Omega;L^2(\mu)),
\end{eqnarray*}
\begin{eqnarray*}
% \nonumber to remove numbering (before each equation)
\int_0^\cdot\Psi(X^\varepsilon(s))ds\in C([0,T];F_{1,2}),~~\Bbb{P}\text{-}a.s.,
\end{eqnarray*}
\begin{eqnarray*}
% \nonumber to remove numbering (before each equation)
X^\varepsilon(t)-L\int_0^t\Psi(X^\varepsilon(s))ds=x+\int_0^t\sqrt{\varepsilon}B(t,X^\varepsilon(s))dW(s),~~\forall~t\in[0,T],~~\Bbb{P}\text{-}a.s..
\end{eqnarray*}
\end{theorem}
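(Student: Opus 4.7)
The plan is to deduce this existence and uniqueness theorem directly from \cite[Theorem 3.1]{RWX} by a simple reduction. That theorem already establishes a well-posedness result for stochastic porous media equations of the form
\[
dX(t)-L\Psi(X(t))dt = \tilde B(t,X(t))dW(t),
\]
in precisely the variational/Gelfand-triple framework $L^2(\mu)\subset F^*_{1,2}\subset (L^2(\mu))^*$ described in Section 2, under hypotheses on $\Psi$ and $\tilde B$ that match \textbf{(H1)} and \textbf{(H2)} in the present paper. So the main point is to observe that the small-noise equation \eref{eq:1} falls under their framework with $\tilde B := \sqrt{\varepsilon}B$.

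First, I would fix $\varepsilon>0$ and set $B_\varepsilon(t,u):=\sqrt{\varepsilon}\,B(t,u)$. Then I would verify that $B_\varepsilon$ still satisfies \textbf{(H2)}: the Lipschitz constant for $B_\varepsilon$ is $\sqrt{\varepsilon}\,C_1$, and the linear growth bound becomes $\|B_\varepsilon(\cdot,u)\|_{L_2(L^2(\mu),F^*_{1,2})}\le \sqrt{\varepsilon}\,C_2(\|u\|_{F^*_{1,2}}+1)$. Hypothesis \textbf{(H1)} on $\Psi$ is unchanged. Hence the equation
\[
dX^\varepsilon(t)-L\Psi(X^\varepsilon(t))dt = B_\varepsilon(t,X^\varepsilon(t))dW(t),\qquad X^\varepsilon(0)=x\in L^2(\mu),
\]
is precisely of the form covered by \cite[Theorem 3.1]{RWX}.

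Next I would apply \cite[Theorem 3.1]{RWX} to obtain a unique strong solution $X^\varepsilon$ enjoying the stated regularity
\[
X^\varepsilon\in L^2(\Omega,C([0,T];F^*_{1,2}))\cap L^2([0,T]\times\Omega;L^2(\mu)),\quad \int_0^\cdot \Psi(X^\varepsilon(s))ds\in C([0,T];F_{1,2}),\ \Bbb P\text{-a.s.},
\]
together with the integral identity
\[
X^\varepsilon(t)-L\!\!\int_0^t\!\Psi(X^\varepsilon(s))ds = x + \int_0^t B_\varepsilon(s,X^\varepsilon(s))dW(s) = x + \sqrt{\varepsilon}\int_0^t B(s,X^\varepsilon(s))dW(s),
\]
for every $t\in[0,T]$, $\Bbb P$-a.s. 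Uniqueness is likewise inherited from \cite{RWX}.

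Since every hypothesis in \cite[Theorem 3.1]{RWX} is satisfied by the rescaled coefficient $B_\varepsilon$, there is really no nontrivial obstacle here; the only point deserving a line of justification is the scale-invariance of \textbf{(H2)} under multiplication by the scalar $\sqrt{\varepsilon}$, which is immediate. In other words, the proof is essentially a one-line invocation of the RWX well-posedness theorem, which is why the authors write that the result is obtained ``instantly.'' No estimates beyond those already supplied in \cite{RWX} are required.
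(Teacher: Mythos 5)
Your reduction is exactly what the paper does: the authors state that Theorem \ref{Th 3.1} follows ``instantly'' from \cite[Theorem 3.1]{RWX}, and your observation that $\sqrt{\varepsilon}B$ still satisfies \textbf{(H2)} (with rescaled constants) is the only point one might wish to record. The proposal is correct and matches the paper's argument.
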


The intention of this paper is to prove a large deviation principle for (\ref{eq:1}), i.e., $X^\varepsilon$ on $C([0,T];F^*_{1,2})$.

Before the statement of the main result, we need to introduce the following skeleton equations for any $h\in L^2([0,T];L^2(\mu))$, which is used to define the good rate function:
\begin{eqnarray}\label{eq:29}
% \nonumber to remove numbering (before each equation)
dY^{h}(t)-L\Psi(Y^{h}(t))dt=B(t,Y^{h}(t))\circ h(t)dt,
\end{eqnarray}
with initial value $x\in F_{1,2}^*$.

\begin{definition}
Given $h\in L^2([0,T];L^2(\mu))$, a function $Y^{h}$ is called a strong solution to
\eref{eq:29} if the following conditions are satisfied:
\begin{equation}\label{equ:1}
Y^{h}\in L^2([0,T];L^2(\mu))\cap
C([0,T];F_{1,2}^*),
\end{equation}
\begin{equation}\label{equ:2}
\int_0^\cdot \Psi(Y^{h}(s))ds\in C([0,T];F_{1,2}),
\end{equation}
and
\begin{equation}\label{equ:3}
Y^{h}(t)-L\int_0^t\Psi(Y^{h}(s))ds=x+\int_0^tB(s,Y^{h}(s))\circ h(s)ds,\ \forall\ t
\in[0,T]\ \ \text{in}\ \ F^*_{1,2}.
\end{equation}
\end{definition}
We state the following result, whose proof is provided in Section 4.
\begin{theorem}\label{Th 3.2}
Suppose that \textbf{(H1)} and \textbf{(H2)} are satisfied.
Then, for each $x\in L^2(\mu)$, there exists a unique strong solution $Y^{h}$ to
\eref{eq:29} and a positive constant $C_{T,M}\in(0,\infty)$ which depends on $T$ and $M$ satisfying
\begin{eqnarray*}
% \nonumber % Remove numbering (before each equation)
\sup_{h\in\mathcal{S}_M}\sup_{t\in[0, T]}\big|Y^{h}(t)\big|_2^2\leq C_{T,M}(|x|_2^2+1).
\end{eqnarray*}
Suppose that \textbf{(H1)}, \textbf{(H2)(i)}, \textbf{(H2)(ii)} and the following are satisfied,
\begin{equation*}
\Psi(r)r\geq c r^2,\ \forall\ r\in \mathbb{R},
\end{equation*}
where $c\in(0, \infty)$. Then, for all $x\in F_{1,2}^*$, there is a unique strong solution $Y^{h}$ to
\eref{eq:29}.
\end{theorem}

\vspace{2mm}

Our main theorem is as follows.

\begin{theorem}\label{Th 3.3}
Let $x\in L^2(\mu)$. Suppose \textbf{(H1)}-\textbf{(H3)} are satisfied. Then, the solution of \eref{eq:1}, i.e., $X^\varepsilon$, satisfies a LDP on $C([0,T];F^*_{1,2})$ with the following good rate function $I:C([0,T];F^*_{1,2})\rightarrow[0,\infty]$ defined by
\begin{eqnarray}\label{007}
% \nonumber to remove numbering (before each equation)
I(f)=\inf%_{\{:f=\mathcal{G}^0(\int_0^\cdot h(s)ds)\}}
\big\{\frac{1}{2}\int_0^T|h(s)|_2^2ds:~f=Y^h,~h\in L^2([0,T];L^2(\mu))~\big\},
\end{eqnarray}
where $Y^h$ solves \eref{eq:29}. Here we use the convention that the infimum of an empty set is $\infty$.
\end{theorem}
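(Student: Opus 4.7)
The plan is to invoke the weak-convergence LDP criterion of Matoussi-Sabbagh-Zhang \cite{MSZ}, a strengthening of the Budhiraja-Dupuis-Maroulas framework. Endow
\[
S^N:=\Big\{h\in L^2([0,T];L^2(\mu)):\int_0^T|h(s)|_2^2\,ds\le N\Big\}
\]
with the weak topology of $L^2([0,T];L^2(\mu))$, so that $S^N$ is a compact Polish space. The criterion reduces Theorem \ref{Th 3.3} to two statements: \emph{(a)} for every $N\in\mathbb{N}$ and every $\{h_n\}\subset S^N$ with $h_n\rightharpoonup h$, the skeleton solutions satisfy $Y^{h_n}\to Y^h$ in $C([0,T];F^*_{1,2})$; and \emph{(b)} for every $N\in\mathbb{N}$ and every family $\{h_\varepsilon\}$ of $S^N$-valued $\{\mathcal F_t\}$-predictable processes converging in distribution to $h$, the controlled SPDE solutions
\[
dX^{\varepsilon,h_\varepsilon}=L\Psi(X^{\varepsilon,h_\varepsilon})\,dt+B(t,X^{\varepsilon,h_\varepsilon})h_\varepsilon(t)\,dt+\sqrt{\varepsilon}\,B(t,X^{\varepsilon,h_\varepsilon})\,dW
\]
converge in distribution to $Y^h$ in $C([0,T];F^*_{1,2})$. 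Existence and uniqueness of $X^{\varepsilon,h_\varepsilon}$ and $Y^h$ for $L^2(\mu)$-data are supplied by Theorems \ref{Th 3.1} and \ref{Th 3.2}; once (a) and (b) hold, goodness of $I$ follows from (a) together with compactness of $S^N$.

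For (a), I would first derive uniform a priori bounds on $\{Y^{h_n}\}$ in $L^\infty([0,T];F^*_{1,2})\cap L^2([0,T];L^2(\mu))$ by applying the Gelfand-triple chain rule to $\|Y^{h_n}(t)\|_{F^*_{1,2}}^2$ in \eref{equ:3} and using \textbf{(H1)}, \textbf{(H2)(ii)} together with $\int_0^T|h_n(s)|_2^2\,ds\le N$; this is the deterministic analogue of the estimate underpinning Theorem \ref{Th 3.1}. Setting $Z_n:=Y^{h_n}-Y^h$ and applying the chain rule to $\|Z_n\|_{F^*_{1,2}}^2$, monotonicity of $\Psi$ renders the $L$-contribution nonnegative, leaving
\[
\tfrac{1}{2}\|Z_n(t)\|_{F^*_{1,2}}^2\le\int_0^t\langle Z_n(s),(B(s,Y^{h_n})-B(s,Y^h))h_n(s)\rangle_{F^*_{1,2}}\,ds+\int_0^t\langle Z_n(s),B(s,Y^h)(h_n(s)-h(s))\rangle_{F^*_{1,2}}\,ds.
\]
The first summand is bounded by $C_1\int_0^t\|Z_n(s)\|_{F^*_{1,2}}^2|h_n(s)|_2\,ds$ via \textbf{(H2)(i)} and feeds directly into Gronwall, since $s\mapsto|h_n(s)|_2$ is uniformly bounded in $L^2([0,T])$. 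The second, genuinely weak-convergence term I would treat by discretizing $[0,T]$ with a mesh $\delta$: the discretization error is controlled via the H\"older regularity of $t\mapsto B(t,\cdot)$ from \textbf{(H3)} together with the uniform $F^*_{1,2}$-bound on $Z_n$, and on each subinterval the residue takes the form
\[
\big\langle B(t_{i-1},Y^h(t_{i-1}))^*Z_n(t_{i-1}),\,{\textstyle\int}_{t_{i-1}}^{t_i}(h_n-h)(s)\,ds\big\rangle_2.
\]
Since $B$ is Hilbert-Schmidt its adjoint is compact, so boundedness of $Z_n(t_{i-1})$ in $F^*_{1,2}$ gives (along a subsequence) strong convergence of the left factor in $L^2(\mu)$, while $\int_{t_{i-1}}^{t_i}(h_n-h)\,ds\rightharpoonup 0$ in $L^2(\mu)$ by weak convergence of $h_n$; the strong-weak pairing vanishes, first as $n\to\infty$ for fixed $\delta$ and then as $\delta\to 0$.

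For (b), the same energy scheme, augmented by the Burkholder-Davis-Gundy inequality, yields uniform moment bounds on $X^{\varepsilon,h_\varepsilon}$ in $L^2(\Omega;L^\infty([0,T];F^*_{1,2})\cap L^2([0,T];L^2(\mu)))$; H\"older continuity in time extracted from \textbf{(H3)} then gives tightness of $\{(h_\varepsilon,X^{\varepsilon,h_\varepsilon})\}$ on $S^N\times C([0,T];F^*_{1,2})$. By Skorokhod's representation one passes to almost surely convergent copies $(\tilde h_\varepsilon,\tilde X^{\varepsilon,\tilde h_\varepsilon})\to(\tilde h,\tilde Y)$ on a new probability space and identifies $\tilde Y=Y^{\tilde h}$ by repeating the monotonicity-plus-Gronwall argument of (a), noting that $\sqrt{\varepsilon}\int_0^\cdot B\,d\tilde W$ vanishes in $L^2(\tilde\Omega;C([0,T];F^*_{1,2}))$ via BDG and \textbf{(H2)(ii)}.

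The principal obstacle is the deliberate absence of a compact embedding in the Gelfand triple $L^2(\mu)\subset F^*_{1,2}\subset(L^2(\mu))^*$, which precludes Aubin-Lions tightness. All convergence must therefore be achieved via direct $F^*_{1,2}$-energy estimates based on monotonicity of $\Psi$ and the isometry induced by $1-L$; the technical heart is the weak-limit cross term in (a) and its stochastic analogue in (b), where \textbf{(H3)} together with the Hilbert-Schmidt character of $B$ are used precisely to recover just enough compactness in time to annihilate the term against the weakly convergent controls.
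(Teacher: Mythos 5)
Your treatment of the skeleton continuity (your condition~(a), which is the paper's condition~(b)) is on the right track and matches the paper's Theorem~\ref{Th 5.2}: time discretization, H\"older regularity in time from \textbf{(H3)}, and compactness of the Hilbert--Schmidt operator $B$ paired against the weak convergence of the controls are exactly the ingredients used there, following \cite{CM}.

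The gap is in your stochastic half. You announce that you will use the Matoussi--Sabbagh--Zhang criterion \cite{MSZ}, but what you actually write down is the older Budhiraja--Dupuis--Maroulas condition: you require that $X^{\varepsilon,h_\varepsilon}$ converge \emph{in distribution} to $Y^h$ when $h_\varepsilon\to h$ in distribution, and you propose to prove this via tightness of $\{(h_\varepsilon,X^{\varepsilon,h_\varepsilon})\}$ on $S^N\times C([0,T];F^*_{1,2})$ together with Skorokhod representation. This is precisely the route that the absence of a compact embedding in $L^2(\mu)\subset F^*_{1,2}\subset(L^2(\mu))^*$ closes off. You identify this as the ``principal obstacle'' in your final paragraph, yet the sketch for (b) still hinges on tightness in $C([0,T];F^*_{1,2})$; the uniform moment bound in $L^2(\Omega;L^\infty([0,T];F^*_{1,2}))$ plus H\"older regularity in time do not yield that tightness without some spatial precompactness, which is exactly what is missing here. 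The whole point of invoking \cite{MSZ} is to replace that step by the simpler requirement
\[
\lim_{\varepsilon\to 0}\Bbb{P}\big(\rho(Y^{h_\varepsilon},X^{h_\varepsilon})>\delta\big)=0
\]
for predictable $h_\varepsilon\in\mathcal{A}_M$, where $X^{h_\varepsilon}=\mathcal{G}^\varepsilon\big(W+\tfrac{1}{\sqrt\varepsilon}\int_0^\cdot h_\varepsilon\,ds\big)$ and $Y^{h_\varepsilon}=\mathcal{G}^0\big(\int_0^\cdot h_\varepsilon\,ds\big)$ are driven by the \emph{same random} control $h_\varepsilon$. That comparison is what the paper's Theorem~\ref{Th 5.1} establishes by a direct It\^o/Gronwall energy estimate on $\|X^{h_\varepsilon}(t)-Y^{h_\varepsilon}(t)\|^2_{F^*_{1,2}}$, using monotonicity of $\Psi$, \textbf{(H2)}, and BDG to obtain $\Bbb{E}\sup_t\|X^{h_\varepsilon}(t)-Y^{h_\varepsilon}(t)\|^2_{F^*_{1,2}}\le \varepsilon\,C_{T,M}$ --- no tightness, no Skorokhod, no identification of the limit. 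As written, your plan would stall at the tightness step; replacing your (b) with the genuine MSZ condition and its pathwise-difference proof repairs the argument and aligns it with the paper.
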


\begin{proof}
Theorem \ref{Th 3.2} implies that there is a measurable mapping
\begin{eqnarray}\label{eq. g0}
\mathcal{G}^0:C([0,T];L^2(\mu))\rightarrow C([0,T];F^*_{1,2})\cap L^2([0,T];L^2(\mu))
\end{eqnarray}
such that $\mathcal{G}^0(\int_0^\cdot h(s)ds):=Y^{h}(\cdot)$, where $Y^{h}$ is the unique strong solution to \eref{eq:29}.

According to Yamada-Watanabe theorem (cf.\cite{RSZ}) and Theorem \ref{Th 3.1}, there exists a Borel-measurable function
 \begin{eqnarray}\label{eq. g1}
\mathcal{G}^\varepsilon:C([0,T];L^2(\mu))\rightarrow C([0,T];F^*_{1,2})\cap L^2([0,T];L^2(\mu))
\end{eqnarray}
such that
$$X^\varepsilon(\cdot)=\mathcal{G}^\varepsilon(W(\cdot)),$$
where $X^\varepsilon$ is the unique strong solution to \eref{eq:1}.

Denote $\mathcal{A}_M$ as
\begin{eqnarray}\label{eqnarray2}
\mathcal{A}_M=\{h\in \mathcal{A}:h(\omega)\in \mathcal{S}_M, \Bbb{P}\text{-}a.s.\},
\end{eqnarray}
where
\begin{eqnarray*}
% \nonumber to remove numbering (before each equation)
\mathcal{A}=\{h:h~\text{is~an}~L^2(\mu)\text{-valued}~\{\mathcal{F}_t\}_{t\geq0}\text{-predictable~process~such~that}~\!\!\int_0^T\!\! |h(s)|_2^2ds<\!\infty,~\Bbb{P}\text{-}a.s.\},
\end{eqnarray*}
and
\begin{eqnarray}\label{eqn8}
% \nonumber to remove numbering (before each equation)
\mathcal{S}_M=\{h\in L^2([0,T];L^2(\mu)): \int_0^T|h(s)|_2^2ds\leq M\}.
\end{eqnarray}
Note that the set $\mathcal{S}_M$ endowed with the following metric is a Polish space (complete separatable metric space) \cite{BDM1}: $d_1(h,k)=\sum_{i=1}^\infty \frac{1}{2^i}\Big|\int_0^T\langle h(s)-k(s),\tilde{e_i}(s)\rangle_2ds\Big|$, where $h,k\in \mathcal{S}_M$ and $\{\tilde{e_i}\}_{i=1}^\infty$ is an orthonormal basis for $L^2([0,T];L^2(\mu))$.

According to \cite[Theorem 3.2]{MSZ}, our claim is established once we have proved:

\textbf{(a)}~~For every $M<\infty$, for any family $\{h_\varepsilon\}_{\varepsilon>0}\subset\mathcal{A}_M$ and for any $\delta>0$,
$$\lim_{\varepsilon\rightarrow0}\Bbb{P}(\rho(Y^{h_\varepsilon},X^{h_\varepsilon})>\delta)=0,$$
where $X^{h_\varepsilon}:=\mathcal{G}^\varepsilon(W(\cdot)+\frac{1}{\sqrt{\varepsilon}}\int_0^\cdot h_\varepsilon(s)ds)$, $Y^{h\varepsilon}:=\mathcal{G}^0(\int_0^\cdot h_\varepsilon(s)ds)$, and $\rho(\cdot,\cdot)$ stands for the metric in the space $C([0,T];F^*_{1,2})$.

\vspace{2mm}
\textbf{(b)}~~For every $M<\infty$ and any family $\{h_\varepsilon\}_{\varepsilon>0}\subset\mathcal{S}_M$ that converges to some element $h$ as $\varepsilon\rightarrow0$, $\mathcal{G}^0(\int_0^\cdot h_\varepsilon(s)ds)$ converges to $\mathcal{G}^0(\int_0^\cdot h(s)ds)$ in the space $C([0,T];F^*_{1,2})$.

In Section 5, \textbf{(a)} and \textbf{(b)} will be checked respectively.
\end{proof}
\vspace{2mm}
\smallskip

\section{Proof of Theorem \ref{Th 3.2}}
\setcounter{equation}{0}
 \setcounter{definition}{0}

In this section, we will prove the following result on existence, uniqueness and provide a priori estimates for the solutions of the skeleton equations (\ref{eq:29}), which implies Theorem \ref{Th 3.2}. The a priori estimates are devoted to obtain \textbf{(b)} in the proof of Theorem \ref{Th 3.3}.

\begin{theorem}
Suppose that \textbf{(H1)} and \textbf{(H2)} are satisfied.
Then, for each $x\in L^2(\mu)$, $h\in \mathcal{S}_M$, there exists a unique strong solution $Y^{h}$ to
\eref{eq:29}, and a positive constant $C_{T,M}\in(0,\infty)$ which depends on $T$ and $M$ satisfying
\begin{eqnarray}\label{eqnarray3}
% \nonumber % Remove numbering (before each equation)
\sup_{h\in\mathcal{S}_M}\sup_{t\in[0, T]}\big|Y^{h}(t)\big|_2^2\leq C_{T,M}(|x|_2^2+1).
\end{eqnarray}
Suppose that \textbf{(H1)}, \textbf{(H2)(i)}, \textbf{(H2)(ii)} and the following are satisfied,
\begin{equation}\label{eq:2.2}
\Psi(r)r\geq c r^2,\ \forall\ r\in \mathbb{R},
\end{equation}
where $c\in(0, \infty)$. Then, for all $x\in F_{1,2}^*$, there is a unique strong solution $Y^{h}$ to
\eref{eq:29}.
\end{theorem}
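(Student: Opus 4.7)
The plan is to adapt the variational scheme of \cite{RWX} for the stochastic equation \eqref{eq:1} to this deterministic skeleton. Since $h\in\mathcal{S}_M$ ensures $B(\cdot,Y^h)\circ h\in L^2([0,T];F_{1,2}^*)$, the forcing has the same regularity as the It\^o integral in \eqref{eq:1}, so the analytic machinery based on the Gelfand triple $L^2(\mu)\subset F_{1,2}^*\subset (L^2(\mu))^*$, the Beurling--Deny criterion for the Dirichlet form $\mathcal{E}$ of $L$, and Minty's monotonicity trick transfers cleanly. The three steps are: (i) a Galerkin approximation, (ii) uniform $L^\infty(0,T;L^2(\mu))$ estimates at the finite-dimensional level, and (iii) passage to the limit together with uniqueness.

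For step (i), I would take an $L^2(\mu)$-orthonormal basis $\{e_k\}$ of eigenvectors of $L$, put $H_n=\mathrm{span}\{e_1,\ldots,e_n\}$, and let $P_n$ be the $L^2$-projection onto $H_n$, extended to a bounded map $F_{1,2}^*\to H_n$ via duality with each $e_k\in F_{1,2}$. The finite-dimensional ODE
\begin{equation*}
\tfrac{d}{dt}Y_n(t)=P_n L\Psi(Y_n(t))+P_nB(t,Y_n(t))\circ h(t),\qquad Y_n(0)=P_n x,
\end{equation*}
has a Carath\'eodory right-hand side that is Lipschitz in $Y_n\in H_n$ by (H1) and (H2)(i), with time dependence controlled by $|h|_2\in L^2([0,T])$, hence admits a unique absolutely continuous solution.

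For step (ii), at the finite-dimensional level the trivial chain rule, combined with $P_nL=LP_n$ on $H_n$ and the $L^2$-self-adjointness of $P_n$, gives
\begin{equation*}
\tfrac12\tfrac{d}{dt}|Y_n(t)|_2^2=-\mathcal{E}(Y_n,\Psi(Y_n))+\langle Y_n,P_nB(t,Y_n)\circ h\rangle_2.
\end{equation*}
The first term is nonpositive by the Beurling--Deny criterion, since (H1) makes $\Psi$ a normal contraction up to its Lipschitz constant with $\Psi(0)=0$. For the second term I would use the identity $\langle Y_n,P_n\eta\rangle_2={}_{F_{1,2}^*}\langle \eta,Y_n\rangle_{F_{1,2}}$ with $\eta=B(t,Y_n)\circ h$, apply (H2)(ii) to bound $\|\eta\|_{F_{1,2}^*}\le C_2(\|Y_n\|_{F_{1,2}^*}+1)|h|_2$, and then use Young's inequality to split the resulting $\|Y_n\|_{F_{1,2}}$ factor via $\|u\|_{F_{1,2}}^2=|u|_2^2+\mathcal{E}(u,u)$, absorbing the $\mathcal{E}(Y_n,Y_n)$ contribution into the Dirichlet-form dissipation on the left-hand side. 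Gronwall's inequality together with $\int_0^T|h(s)|_2^2\,ds\le M$ then yields $\sup_{t\in[0,T]}|Y_n(t)|_2^2\le 2|x|_2^2e^{CT}$, uniformly in $n$ and $h\in\mathcal{S}_M$.

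For step (iii), the uniform bounds let one extract weak/weak-$\ast$ subsequential limits $Y_n\rightharpoonup Y^h$ in $L^\infty(0,T;L^2(\mu))$ and $\Psi(Y_n)\rightharpoonup \zeta$ in $L^2(0,T;L^2(\mu))$; Aubin--Lions compactness plus Minty's monotonicity trick, applied to the monotone map $u\mapsto-L\Psi(u)$ (whose monotonicity is precisely the Beurling--Deny positivity of $\mathcal{E}(\cdot,\Psi(\cdot))$), identifies $\zeta=\Psi(Y^h)$ and shows that $Y^h$ solves \eqref{eq:29}. Uniqueness follows by subtracting two solutions and repeating the same energy identity. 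For the extension to $x\in F_{1,2}^*$ under $\Psi(r)r\ge cr^2$, I would replace the $L^2$-chain rule by the one for $\|Y_n\|_{F_{1,2}^*}^2$ in the Gelfand triple, where the coercivity now supplies a $-c\int_0^T|Y_n|_2^2\,ds$ dissipation sufficient to close a Gronwall argument from the weaker initial condition. The main obstacle I anticipate is precisely the absorption step in (ii): without a pointwise lower bound on $\Psi'$, getting $\mathcal{E}(Y_n,\Psi(Y_n))$ to dominate the $\mathcal{E}(Y_n,Y_n)$ piece of $\|Y_n\|_{F_{1,2}}^2$ requires a careful use of the sub-Markovian structure and the Lipschitz (rather than just continuous) assumption on $\Psi$, and the technical work of \cite{RWX} is really needed here.
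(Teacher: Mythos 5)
Your proposal takes a genuinely different route from the paper, but two of its load-bearing ingredients are not available in the generality that this paper is specifically designed to reach, so as written there are real gaps.

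First, your Galerkin scheme is built on an $L^2(\mu)$-orthonormal basis of \emph{eigenvectors of $L$}, with $P_nL=LP_n$ used in step (ii). In the present setting $L$ is only assumed to be the negative definite self-adjoint generator of a sub-Markovian $C_0$-contraction semigroup on a $\sigma$-finite space; it need not have any point spectrum. The paper's leading examples --- $L=-(-\Delta)^\alpha$ on $L^2(\mathbb{R}^d)$ and generalized Schr\"odinger operators --- have purely continuous spectrum, so no eigenbasis exists and the commutation $P_nL=LP_n$ (and with it your Beurling--Deny computation at the finite-dimensional level) breaks down. A Galerkin scheme in the variational framework must instead be run with an arbitrary ONB of $H=F^*_{1,2}$ inside the triple $L^2(\mu)\subset F^*_{1,2}\subset(L^2(\mu))^*$, and then the finite-dimensional projections no longer interact nicely with $L$. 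Second, you invoke Aubin--Lions compactness to pass to the limit. The paper explicitly advertises as one of its contributions that it avoids any compactness assumption on the embeddings of the Gelfand triple (see the Introduction and the comparison with \cite{L,XZ}), and for $\sigma$-finite (non-finite) $\mu$ such compactness generally fails; moreover it is not needed here because the operator $u\mapsto -L\Psi(u)$ is (locally) monotone, so Minty's trick alone, without compactness, is what the monotone theory provides.

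The paper's actual proof circumvents both problems by a two-parameter regularization rather than a direct Galerkin scheme: it considers the equation with $-L$ replaced by $-(\nu-L)$ and $\Psi$ replaced by $\Psi+\lambda I$, $\nu,\lambda\in(0,1)$, verifies hemicontinuity, local monotonicity, coercivity and growth for the resulting drift $A(t,u)=(L-\nu)(\Psi+\lambda I)(u)+B(t,u)\circ h(t)$ under the Gelfand triple $L^2(\mu)\subset F^*_{1,2}\subset(L^2(\mu))^*$, and then applies the Liu--R\"ockner variational existence theorem \cite[Theorem 1.1]{LR1} (which is compactness-free and needs no eigenbasis, only an abstract ONB of $H$). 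The uniform $L^2(\mu)$ bound \eref{eqnarray3} is obtained not via a Dirichlet-form chain rule directly, but by applying the bounded operator $(\alpha-L)^{-1/2}:F^*_{1,2}\to L^2(\mu)$ to the equation, computing $\frac{d}{dt}|(\alpha-L)^{-1/2}Y^h_{\nu,\lambda}|_2^2$, using the dissipativity estimates (3.19)--(3.20) of \cite{RWX} (which play the role of your Beurling--Deny inequality), multiplying by $\alpha$ (so that $\sqrt{\alpha}(\alpha-L)^{-1/2}$ is a contraction), and letting $\alpha\to\infty$. Then one lets $\lambda\to0$ and $\nu\to0$, with Cauchy estimates in $C([0,T];F^*_{1,2})$ driven by the monotonicity inequality $(\Psi(r)-\Psi(r'))(r-r')\geq\widetilde\alpha|\Psi(r)-\Psi(r')|^2$ and Gronwall. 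The extension to $x\in F^*_{1,2}$ under $\Psi(r)r\geq cr^2$ is handled exactly as you guessed, by the chain rule for $\|\cdot\|^2_{F^*_{1,2}}$ where coercivity supplies the needed dissipation. So the spirit of your step (ii) --- dissipativity of the Dirichlet form against the nondecreasing $\Psi$ --- is indeed the key mechanism, but the surrounding scaffolding has to be the $(\nu,\lambda,\alpha)$-regularization plus the abstract variational theorem, not an eigenvector Galerkin plus Aubin--Lions.
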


The proof of Theorem 4.1 is a generalization of \cite[Theorem 3.1]{RWX}, and the main difference is that there is no diffusion term but one more drift term in \eref{eq:29}, so appropriate estimates on the drift term are needed. To prove Theorem 4.1, we need to consider the following approximating equations for \eref{eq:29}:
\begin{eqnarray}\label{eq:30}
% \nonumber to remove numbering (before each equation)
dY_\nu^{h}(t)+\big((\nu-L)\Psi(Y^{h}_\nu(t))-B(t,Y^{h}_\nu(t))\circ h(t)\big)dt=0,
\end{eqnarray}
with initial value $Y_\nu^{h}(0)=x\in L^2(\mu)$, where $\nu\in(0,1)$. For \eref{eq:30}, we have the following lemma.
\begin{lemma}
Suppose \textbf{(H1)} and \textbf{(H2)} are satisfied. Then, for each $x\in
L^2(\mu)$, $h\in \mathcal{S}_M$, there is a unique solution to
(\ref{eq:30}), denoted by $Y^{h}_\nu$, i.e., it has the following
properties,
\begin{equation}\label{equ:2.4}
Y^{h}_\nu\in L^2\big([0,T];L^2(\mu)\big)\cap
C([0,T];F_{1,2}^*)
\end{equation}
and
\begin{equation}\label{equ:2.5}
Y^{h}_\nu(t)+(\nu-L)\int_0^t\Psi(Y^{h}_\nu(s))ds=x+\int_0^tB(s,Y^{h}_\nu(s))\circ h(s)ds,\ \forall~ t\in[0,T]
\end{equation}
holds in $F^*_{1,2}$. Furthermore, there exists a positive constant $C_{T,M}\in(0,\infty)$ which depends on $T$ and $M$ but is independent of $\nu\in(0,1)$ such that,
\begin{equation}\label{equ:2.6}
\sup_{h\in\mathcal{S}_M}\sup_{t\in[0, T]}|Y^{h}_\nu(t)|_2^2\leq C_{T,M}(|x|_2^2+1),\ \forall \nu\in(0,1).
\end{equation}
Suppose that \textbf{(H1)}, \textbf{(H2)(i)}, \textbf{(H2)(ii)} and  \eref{eq:2.2} are satisfied, then for
all $x\in F_{1,2}^*$, there is a unique solution
$Y^{h}_\nu$ to (\ref{eq:30}) satisfying (\ref{equ:2.4}) and (\ref{equ:2.5}).
\end{lemma}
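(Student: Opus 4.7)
The plan is to adapt the variational existence proof of \cite[Theorem 3.1]{RWX} to the present deterministic controlled setting. Since (4.3) differs from (1.1) only in replacing the noise $\sqrt{\varepsilon}B(t,u)\,dW(t)$ by the absolutely continuous drift $B(t,u)\circ h(t)\,dt$, I would work in the same Gelfand triple $L^2(\mu)\subset F_{1,2}^*\subset(L^2(\mu))^*$, build finite-dimensional Galerkin approximations $Y_\nu^{h,n}$ along the spectral basis of the self-adjoint operator $L$, extract uniform a priori bounds, and pass to the limit via the classical Minty monotone-operator argument using hypothesis \textbf{(H1)}.

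The core a priori estimate is obtained at the Galerkin level by testing the finite-dimensional ODE for $Y_\nu^{h,n}$ against $Y_\nu^{h,n}$ itself. This produces
\begin{equation*}
\tfrac{1}{2}\tfrac{d}{dt}|Y_\nu^{h,n}(t)|_2^2+\nu\!\int_E\Psi(Y_\nu^{h,n})Y_\nu^{h,n}\,d\mu+\langle -L\Psi(Y_\nu^{h,n}),Y_\nu^{h,n}\rangle_2=\langle B(t,Y_\nu^{h,n})\circ h(t),Y_\nu^{h,n}\rangle_{F_{1,2}^*}.
\end{equation*}
The first integral on the left is nonnegative by monotonicity of $\Psi$ with $\Psi(0)=0$. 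The second is nonnegative since $\Psi/\|\Psi\|_{\mathrm{Lip}}$ is a normal contraction and $L$ generates a sub-Markovian semigroup, so the associated Dirichlet-form inequality yields $\langle -L\Psi(u),u\rangle_2\geq 0$. The right-hand side is bounded, using \textbf{(H2)}(ii) and the continuous embedding $L^2(\mu)\hookrightarrow F_{1,2}^*$, by $C(1+|Y_\nu^{h,n}|_2^2)(1+|h(t)|_2^2)$. Since $\int_0^T|h(s)|_2^2\,ds\leq M$, Gronwall's inequality yields the bound (4.6) uniformly in $n$ and in $h\in\mathcal{S}_M$.

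Taking a weak-$*$ limit along a subsequence and invoking monotonicity of $r\mapsto(\nu-L)\Psi(r)$ in the usual Minty fashion identifies the nonlinear limit and produces a solution with (4.4)--(4.5). Uniqueness follows from testing the equation for the difference of two solutions against itself, again using monotonicity of $\Psi$ and the Lipschitz condition \textbf{(H2)}(i) on $B$, followed by Gronwall. For $x\in F_{1,2}^*$ under the coercivity $\Psi(r)r\geq cr^2$, an $L^2(\mu)$ estimate from the initial datum is no longer available; one instead applies the chain rule in the Gelfand triple $L^2(\mu)\subset F_{1,2}^*\subset(L^2(\mu))^*$ to $\|Y_\nu^h(t)\|_{F_{1,2}^*}^2$, and the coercivity then extracts both an $F_{1,2}^*$ bound on $Y_\nu^h$ and an $L^2([0,T];L^2(\mu))$ bound, which suffice to run the same scheme, with an additional approximation of the initial datum by a sequence $x_n\in L^2(\mu)$.

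The chief obstacle is that $h$ is only square-integrable in time, so the coefficient multiplying $Y_\nu^h$ in the drift is merely $L^1$ in $t$; every Gronwall step must be formulated with $\int_0^T|h(s)|_2^2\,ds$ on the right-hand side, and uniformity in $h\in\mathcal{S}_M$ has to be carried through all the approximation steps in order to reach (4.6) in the form stated. A secondary technical point is the rigorous justification of $\langle -L\Psi(u),u\rangle_2\geq 0$ for Lipschitz $\Psi$ with $\Psi(0)=0$, which appeals to the standard characterization of Dirichlet-form generators via normal contractions.
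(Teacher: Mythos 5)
Your outline differs from the paper's in one important structural way, and that difference conceals a genuine gap. The paper does \emph{not} redo Galerkin from scratch: it first treats the case $x\in F_{1,2}^*$ under the extra coercivity $\Psi(r)r\geq cr^2$ by verifying the four abstract conditions (hemicontinuity, local monotonicity, coercivity, growth) of the variational operator $A(t,u)=(L-\nu)\Psi(u)+B(t,u)\circ h(t)$ in the Gelfand triple $L^2(\mu)\subset F_{1,2}^*\subset(L^2(\mu))^*$, and then cites the general well-posedness theorem of Liu--R\"ockner as a black box. When $\Psi$ lacks coercivity, the paper does not attempt a direct Galerkin argument for $x\in L^2(\mu)$; instead it adds a vanishing linear perturbation, $\Psi\rightsquigarrow\Psi+\lambda\,\mathrm{Id}$ with $\lambda\in(0,1)$, so that coercivity holds for each fixed $\lambda$, invokes Liu--R\"ockner again, and only then establishes the $L^2(\mu)$-bound uniformly in $\lambda$ (and in $\nu$, $h$) and passes to the limit $\lambda\to0$. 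Your proposal compresses these two layers into one Galerkin step and thereby skips the $\lambda$-approximation entirely.

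The gap is twofold. First, ``Galerkin along the spectral basis of $L$'' is not available for the operators the paper targets: $L=-(-\Delta)^\alpha$ on $\mathbb{R}^d$, generalized Schr\"odinger operators and fractal Laplacians need not have discrete spectrum, so there is no orthonormal eigenbasis to project onto, and the variational theory's Galerkin scheme runs over an arbitrary ONB of $H=F_{1,2}^*$, whose orthogonal projections are \emph{not} orthogonal in $L^2(\mu)$. In particular your a priori estimate, which tests the Galerkin ODE against $Y_\nu^{h,n}$ in $L^2(\mu)$, is not justified: $\langle P_n[(\nu-L)\Psi(Y_\nu^{h,n})],\,Y_\nu^{h,n}\rangle_2$ cannot be rewritten as $\langle(\nu-L)\Psi(Y_\nu^{h,n}),\,Y_\nu^{h,n}\rangle_2$ because $P_n$ is the $F_{1,2}^*$-projection and need not commute with the $L^2(\mu)$ pairing, so the Dirichlet-form positivity $\langle -L\Psi(u),u\rangle_2\geq0$ does not automatically survive the projection; and $|P_n x|_2$ need not be controlled by $|x|_2$. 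Second, the right-hand side you wrote, $\langle B(t,Y_\nu^{h,n})\circ h(t),Y_\nu^{h,n}\rangle_{F_{1,2}^*}$, is inconsistent with the left-hand side $\tfrac12\tfrac{d}{dt}|Y_\nu^{h,n}|_2^2$: testing in $L^2(\mu)$ must give the $L^2(\mu)$ inner product there, and making that precise is exactly where the difficulty sits.

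The paper circumvents both difficulties at the level of the already-constructed solution $Y^h_{\nu,\lambda}$: it applies $(\alpha-L)^{-1/2}$ to both sides of the equation, which maps $F_{1,2}^*$ into $L^2(\mu)$, so one can legitimately apply the $L^2(\mu)$-chain rule in the \emph{other} Gelfand triple $F_{1,2}\subset L^2(\mu)\subset F_{1,2}^*$; the positivity $\big\langle(L-\nu)(\alpha-L)^{-1/2}\Psi(u),(\alpha-L)^{-1/2}u\big\rangle\leq0$ holds because $(\alpha-L)^{-1/2}$ commutes with $L$, and the bound survives the limit $\alpha\to\infty$ since $\sqrt{\alpha}(\alpha-L)^{-1/2}$ is a contraction converging strongly to the identity on $L^2(\mu)$. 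Your instinct that the sub-Markovian/normal-contraction property of $(P_t,\Psi)$ is the engine behind (4.6) is correct and matches the paper's estimates (4.15)--(4.16), but without the $\lambda$-regularization of $\Psi$ and the $(\alpha-L)^{-1/2}$-smoothing there is no rigorous route from that observation to the stated bound.
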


\begin{proof}
First, assume that $x\in F^*_{1,2}$ and \eref{eq:2.2} is satisfied. Define
$$
A(t,u):=(L-\nu)\Psi(u)+B(t,u)\circ h(t),
$$
notice that
\begin{eqnarray}\label{eqnarray4}
% \nonumber % Remove numbering (before each equation)
B(t,u)\circ h(t)\in F^*_{1,2}\subset (L^2(\mu))^*~~\text{since}~~B(t,u)\in L_2(L^2(\mu),F^*_{1,2}),
\end{eqnarray}
consequently,
$$A(t,u):[0,T]\times L^2(\mu)\rightarrow (L^2(\mu))^*.$$
By applying \cite[Theorem 1.1]{LR1} under the Gelfand triple $L^2(\mu)\subset F^*_{1,2}\equiv F_{1,2}\subset (L^2(\mu))^*$, we can prove the existence and uniqueness of the solutions to \eref{eq:30}. Now, we verify the four conditions in \cite[Theorem 1.1]{LR1}. As mentioned above, compared with \cite[Lemma 3.1, Step 1]{RWX}, our $A$ here has one more term $B(t,\cdot)\circ h(t)$, so we only need to estimate terms with $B(t,\cdot)\circ h(t)$.

\vspace{2mm}
\textbf{(i) }\textbf{Hemicontinuity}
\vspace{2mm}

Let $u,v,w\in V(:=L^2(\mu))$. We need to show that for $\lambda\in \Bbb{R},~ |\lambda|\leq1$,
$$
\lim_{\lambda\rightarrow0}~_{(L^2(\mu))^*}\big\langle A(t,u+\lambda v),w\big\rangle_{L^2(\mu)}-~ _{(L^2(\mu))^*}\big\langle A(t,u),w\big\rangle_{L^2(\mu)}=0.
$$
Since in \cite[ Lemma 3.1, Step 1]{RWX}, the authors have proved that
$$\lim_{\lambda\rightarrow0}~ _{(L^2(\mu))^*}\big\langle (L-\nu)\Psi(u+\lambda v),w\big\rangle_{L^2(\mu)}-~ _{(L^2(\mu))^*}\big\langle (L-\nu)\Psi(u),w\big\rangle_{L^2(\mu)}=0,$$
here we only need to prove
\begin{eqnarray}\label{eq:3.1}
% \nonumber to remove numbering (before each equation)
\lim_{\lambda\rightarrow0}~ _{(L^2(\mu))^*}\big\langle B(t,u+\lambda v)\circ h(t),w\big\rangle_{L^2(\mu)}-~ _{(L^2(\mu))^*}\big\langle B(t,u)\circ h(t),w\big\rangle_{L^2(\mu)}=0.
\end{eqnarray}
Notice that by \eref{eqn6}, \eref{eqnarray4} and \textbf{(H2)(i)},
\begin{eqnarray*}
% \nonumber to remove numbering (before each equation)
&&\!\!\!\!\!\!\!\! _{(L^2(\mu))^*}\langle (B(t,u+\lambda v)-B(t,u))\circ h(t), w\rangle_{L^2(\mu)}\nonumber\\
=&&\!\!\!\!\!\!\!\!\langle(B(t,u+\lambda v)-B(t,u))\circ h(t), w\rangle_{F^*_{1,2}}\nonumber\\
\leq&&\!\!\!\!\!\!\!\!\|B(t,u+\lambda v)-B(t,u)\|_{L_2(L^2(\mu),F^*_{1,2})}\cdot |h(t)|_2 \cdot \|w\|_{F^*_{1,2}}\nonumber\\
\leq&&\!\!\!\!\!\!\!\!C\lambda\|v\|_{F^*_{1,2}}\cdot|h(t)|_2\cdot\|w\|_{F^*_{1,2}}\rightarrow0,~~\text{as}~~\lambda\rightarrow0,
\end{eqnarray*}
which implies \eref{eq:3.1}.

\vspace{2mm}
\textbf{(ii)} \textbf{Local Monotonicity}
\vspace{2mm}

Let $u,v\in V(=L^2(\mu))$. By \eref{eqn6}, \eref{eqnarray4} and \textbf{(H2)(i)}, we know that
\begin{eqnarray}\label{4}
% \nonumber to remove numbering (before each equation)
&&\!\!\!\!\!\!\!\!_{(L^2(\mu))^*}\big\langle(B(t,u)-B(t,v))\circ h(t),u-v\big\rangle_{L^2(\mu)}\nonumber\\
=&&\!\!\!\!\!\!\!\!\big\langle(B(t,u)-B(t,v))\circ h(t),u-v\big\rangle_{F^*_{1,2}}\nonumber\\
\leq&&\!\!\!\!\!\!\!\!\|B(t,u)-B(t,v)\|_{L_2(L^2(\mu),F^*_{1,2})}\cdot |h(t)|_2\cdot\|u-v\|_{F^*_{1,2}}\nonumber\\
\leq&&\!\!\!\!\!\!\!\!C_1\|u-v\|_{F^*_{1,2}}\cdot |h(t)|_2\cdot\|u-v\|_{F^*_{1,2}}\nonumber\\
=&&\!\!\!\!\!\!\!\!C_1|h(t)|_2\cdot\|u-v\|^2_{F^*_{1,2}}.
\end{eqnarray}
Denote by $Lip\Psi$ the Lipschitz constant of $\Psi$. From \cite[Lemma 3.1]{RWX}, we know that
\begin{eqnarray*}
% \nonumber to remove numbering (before each equation)
&&\!\!\!\!\!\!\!\!_{(L^2(\mu))^*}\big\langle(L-\nu)(\Psi(u)-\Psi(v)), u-v\big\rangle_{L^2(\mu)}\nonumber\\
\leq&&\!\!\!\!\!\!\!\!\big(\frac{(1-\nu)^2}{\widetilde{\alpha}}\big)\cdot \|u-v\|^2_{F^*_{1,2}},
\end{eqnarray*}
where
\begin{eqnarray}\label{eqnarray6}
% \nonumber % Remove numbering (before each equation)
\widetilde{\alpha}:=(k+1)^{-1},~~ k:=Lip\Psi.
\end{eqnarray}
So
\begin{eqnarray}\label{eq:3.2}
% \nonumber to remove numbering (before each equation)
&&\!\!\!\!\!\!\!\!_{(L^2(\mu))^*}\big\langle A(t,u)-A(t,v),u-v\big\rangle_{L^2(\mu)}\nonumber\\
\leq&&\!\!\!\!\!\!\!\!\Big(\frac{(1-\nu)^2}{\widetilde{\alpha}}+C_1|h(t)|_2\Big)\cdot \|u-v\|^2_{F^*_{1,2}},
\end{eqnarray}
which implies the local monotonicity.

\vspace{2mm}
\textbf{(iii)} \textbf{Coercivity}
\vspace{2mm}

Let $u\in V(=L^2(\mu))$. By \eref{eqn6}, \eref{eqnarray4} and \textbf{(H2)(ii)}, we have
\begin{eqnarray}
% \nonumber to remove numbering (before each equation)
&&\!\!\!\!\!\!\!\!_{(L^2(\mu))^*}\big\langle B(t,u)\circ h(t),u\big\rangle_{L^2(\mu)}\nonumber\\
=&&\!\!\!\!\!\!\!\!\big\langle B(t,u)\circ h(t),u\big\rangle_{F^*_{1,2}}\nonumber\\
\leq&&\!\!\!\!\!\!\!\!\|B(t,u)\|_{L_2(L^2(\mu),F^*_{1,2})}\cdot|h(t)|_2\cdot\|u\|_{F^*_{1,2}}\nonumber\\
\leq&&\!\!\!\!\!\!\!\!2C_2|h(t)|_2\cdot(\|u\|^2_{F^*_{1,2}}+1).
\end{eqnarray}
From \cite[Lemma 3.1]{RWX}, we know that, for any $\theta>0$,
\begin{eqnarray*}
% \nonumber to remove numbering (before each equation)
&&\!\!\!\!\!\!\!\!_{(L^2(\mu))^*}\big\langle (1-L)(\Psi(u)),u\big\rangle_{L^2(\mu)}\nonumber\\
\leq&&\!\!\!\!\!\!\!\!\Big[-c+\theta^2k^2(1-\nu)\Big]\cdot|u|_2^2+\frac{(1-\nu)}{\theta^2}\cdot\|u\|^2_{F^*_{1,2}}.
\end{eqnarray*}
So, we obtain
\begin{eqnarray}\label{eq:3.3}
% \nonumber to remove numbering (before each equation)
&&\!\!\!\!\!\!\!\!_{(L^2(\mu))^*}\big\langle A(t,u),u\big\rangle_{L^2(\mu)}\nonumber\\
\leq&&\!\!\!\!\!\!\!\!\Big[-c+\theta^2k^2(1-\nu)\Big]\cdot|u|_2^2+\Big[\frac{(1-\nu)}{\theta^2}+2C_2|h(t)|_2\Big]\cdot(\|u\|^2_{F^*_{1,2}}+1).
\end{eqnarray}
Choosing $\theta$ small enough, $-c+\theta^2k^2(1-\nu)$ becomes
negative, which implies the coercivity.

\vspace{2mm}
\textbf{(iv) Growth}

Let $u\in V(=L^2(\mu))$. Notice that
\begin{eqnarray*}
% \nonumber to remove numbering (before each equation)
\|A(t,u)\|_{(L^2(\mu))^*}=\sup_{|v|_2=1}~_{(L^2(\mu))^*}\big\langle(L-\nu)(\Psi(u))+B(t,u)\circ h(t),v\big\rangle_{L^2(\mu)}.
\end{eqnarray*}
From \cite[Lemma 3.1]{RWX}, we know that
$$\|(L-\nu)\Psi(u)\|_{(L^2(\mu))^*}\leq2k|u|_2.$$
Since $L^2(\mu)\subset F^*_{1,2}\subset (L^2(\mu))^*$ continuously and densely, from \textbf{(H2)(ii)} we get
$$_{(L^2(\mu))^*}\big\langle B(t,u)\circ h(t),v\big\rangle_{L^2(\mu)}\leq C_2|h(t)|_2\cdot(|u|_2+1)\cdot|v|_2.$$
So
\begin{eqnarray}\label{eq:3.4}
% \nonumber to remove numbering (before each equation)
\|A(t,u)\|_{(L^2(\mu))^*}\leq\big(2k+C_2|h(t)|_2\big)\cdot(|u|_2+1).
\end{eqnarray}
Hence the growth holds.

Then by \cite[Theorem 1.1]{LR1} or \cite[Theorem 4.2.4]{PR}, there exists a unique solution to \eref{eq:30}, denoted by $Y^{h}_\nu$, which takes value in $F^*_{1,2}$ and satisfies (\ref{equ:2.4}) and (\ref{equ:2.5}).

\begin{remark}
As shown above, the coefficients in the right-hand sides of \eref{eq:3.2}, \eref{eq:3.3} and \eref{eq:3.4} have a term $|h(t)|_2$, which looks different from the conditions in \cite[Theorem 1.1]{LR1}, where the coefficients are constants. However,
since $h\in\mathcal{S}_M$, i.e., $\int_{0}^{T}|h(s)|_2^2ds\leq M$, by using the same idea as in the proof of \cite[Theorem 1.1]{LR1}, it is not difficult to get Lemma 4.1. So, here we still regard (i)-(iv) as the corresponding conditions of \cite[Theorem 1.1]{LR1}.
\end{remark}

\vspace{2mm}
If $x\in F^*_{1,2}$, but \eref{eq:2.2} is not satisfied, (i), (ii) and (iv) still hold, but (iii) may not be true in general. We shall approximate $\Psi$ by $\Psi+\lambda I$, $\lambda\in(0,1)$, i.e., we consider the following approximating equations:
\begin{eqnarray}\label{eq:2.3}
% \nonumber to remove numbering (before each equation)
dY^{h}_{\nu,\lambda}(t)=(L-\nu)(\Psi(Y^{h}_{\nu,\lambda}(t))dt+\lambda Y^{h}_{\nu,\lambda}(t))dt+B(t,Y^{h}_{\nu,\lambda}(t))\circ h(t)dt,
\end{eqnarray}
with initial value $Y^{h}_{\nu,\lambda}(0))=x$. By \cite[Theorem 1.1]{LR1} and Remark 4.1, we know that if $x\in F^*_{1,2}$, there exists a unique solution $Y^{h}_{\nu,\lambda}$ to \eref{eq:2.3} such that $Y^{h}_{\nu,\lambda}\in L^2\big([0,T];L^2(\mu)\big)\cap C([0,T];F_{1,2}^*)$, and for all $t\in[0,T]$,
\begin{eqnarray}\label{eq00}
% \nonumber to remove numbering (before each equation)
Y^{h}_{\nu,\lambda}(t)=x+(L-\nu)\int_0^t(\Psi(Y^{h}_{\nu,\lambda}(s))+\lambda Y^{h}_{\nu,\lambda}(s))ds+\int_0^tB(s,Y^{h}_{\nu,\lambda}(s))\circ h(s)ds
\end{eqnarray}
holds in $F^*_{1,2}$, and
\begin{eqnarray}\label{eqna2.1}
\sup_{t\in[0,T]}\|Y^{h}_{\nu,\lambda}(t)\|^2_{F^*_{1,2}}<\infty.
\end{eqnarray}
Next, we want to prove that the sequence $\{Y^{h}_{\nu,\lambda}\}$
converges to the solution of \eref{eq:30} as $\lambda\to 0$. From now on,
we assume that the initial value $x\in L^2(\mu)$, and we have the following result for \eref{eq:2.3}.
\begin{claim}\label{claim1}
\begin{eqnarray*}
% \nonumber to remove numbering (before each equation)
\sup_{s\in[0,t]}|Y^{h}_{\nu,\lambda}(s)|_2^2+4\lambda \int_0^t\|Y^{h}_{\nu,\lambda}(s)\|^2_{F_{1,2}}ds\leq C_{M,T}(|x|_2^2+1), \forall\nu, \lambda\in(0,1),~h\in\mathcal{S}_M,~t\in[0,T],
\end{eqnarray*}
and $Y^{h}_{\nu,\lambda}\in C([0,T];L^2(\mu))$. Here $C_{T,M}$ is a positive constant which depends on $T$ and $M$, but is independent of $\nu, \lambda\in(0,1)$.
\end{claim}
\begin{proof}
Rewrite \eref{eq:2.3} in the following form
\begin{eqnarray*}
% \nonumber to remove numbering (before each equation)
Y^{h}_{\nu,\lambda}(t)=&&\!\!\!\!\!\!\!\!x+\int_0^t(L-\nu)(\Psi(Y^{h}_{\nu,\lambda}(s))+\lambda Y^{h}_{\nu,\lambda}(s))ds\nonumber\\
&&\!\!\!\!\!\!\!\!+\int_0^tB(s,Y^{h}_{\nu,\lambda}(s))\circ h(s)ds, ~~\forall t\in[0,T].
\end{eqnarray*}
Let $\nu<\alpha<\infty$, applying the operator $(\alpha-L)^{-\frac{1}{2}}:F^*_{1,2}\rightarrow L^2(\mu)$ to both sides of the above equation, we get
\begin{eqnarray*}
% \nonumber to remove numbering (before each equation)
(\alpha-L)^{-\frac{1}{2}}Y^{h}_{\nu,\lambda}(t)=&&\!\!\!\!\!\!\!\!(\alpha-L)^{-\frac{1}{2}}x+\int_0^t(L-\nu)(\alpha-L)^{-\frac{1}{2}}(\Psi(Y^{h}_{\nu,\lambda}(s))+\lambda Y^{h}_{\nu,\lambda}(s))ds\nonumber\\
&&\!\!\!\!\!\!\!\!+\int_0^t(\alpha-L)^{-\frac{1}{2}}\big(B(s,Y^{h}_{\nu,\lambda}(s))\circ h(s)\big)ds.
\end{eqnarray*}
Applying the chain rule in $L^2(\mu)$, we obtain that for $t\in[0,T]$,
\begin{eqnarray}\label{eq:2.7}
% \nonumber to remove numbering (before each equation)
&&\!\!\!\!\!\!\!\!\big|(\alpha-L)^{-\frac{1}{2}}Y^{h}_{\nu,\lambda}(t)\big|_2^2\nonumber\\
=&&\!\!\!\!\!\!\!\!|(\alpha-L)^{-\frac{1}{2}}x|_2^2+2\int_0^t~_{F^*_{1,2}}\Big\langle (L-\nu)(\alpha-L)^{-\frac{1}{2}}\big(\Psi(Y^{h}_{\nu,\lambda}(s))\big),(\alpha-L)^{-\frac{1}{2}}Y^{h}_{\nu,\lambda}(s)\Big\rangle_{F_{1,2}}ds\nonumber\\
&&\!\!\!\!\!\!\!\!+2\lambda\int_0^t~_{F^*_{1,2}}\Big\langle(L-\nu)(\alpha-L)^{-\frac{1}{2}}Y^{h}_{\nu,\lambda}(s),(\alpha-L)^{-\frac{1}{2}}Y^{h}_{\nu,\lambda}(s)\Big\rangle_{F_{1,2}}ds\nonumber\\
&&\!\!\!\!\!\!\!\!+2\int_0^t~_{F^*_{1,2}}\Big\langle(\alpha-L)^{-\frac{1}{2}}\big(B(s,Y^{h}_{\nu,\lambda}(s))\circ h(s)\big),(\alpha-L)^{-\frac{1}{2}}Y^{h}_{\nu,\lambda}(s)\Big\rangle_{F_{1,2}}ds.
\end{eqnarray}
From \cite[(3.19), (3.20)]{RWX2022}, we know that
\begin{eqnarray}\label{eq:2.6}
% \nonumber to remove numbering (before each equation)
2\int_0^t~_{F^*_{1,2}}\Big\langle (L-\nu)(\alpha-L)^{-\frac{1}{2}}\big(\Psi(Y^{h}_{\nu,\lambda}(s))\big),(\alpha-L)^{-\frac{1}{2}}Y^{h}_{\nu,\lambda}(s)\Big\rangle_{F_{1,2}}ds\leq0,
\end{eqnarray}
and
\begin{eqnarray}\label{eq:2.5}
% \nonumber to remove numbering (before each equation)
&&\!\!\!\!\!\!\!\!2\lambda\int_0^t~_{F^*_{1,2}}\Big\langle(L-\nu)(\alpha-L)^{-\frac{1}{2}}Y^{h}_{\nu,\lambda}(s),(\alpha-L)^{-\frac{1}{2}}Y^{h}_{\nu,\lambda}(s)\Big\rangle_{F_{1,2}}ds\nonumber\\
\leq&&\!\!\!\!\!\!\!\!-2\lambda\int_0^t\|(\alpha-L)^{-\frac{1}{2}}Y^{h}_{\nu,\lambda}(s)\|^2_{F_{1,2}}ds+2\int_0^t|(\alpha-L)^{-\frac{1}{2}}Y^{h}_{\nu,\lambda}(s)|^2_2ds.
\end{eqnarray}
To estimate the fourth term in the right-hand side of \eref{eq:2.7}, we first recall the Gelfand triple $F_{1,2}\subset L^2(\mu)\equiv (L^2(\mu))^*\subset F^*_{1,2}$, the Riesz map which identifies $L^2(\mu)$ and $(L^2(\mu))^*$ is the identity map $I:L^2(\mu)\rightarrow (L^2(\mu))^*$, i.e., $\forall~u\in L^2(\mu)$, $I(u)=u$, and in fact $(L^2(\mu))^*=L^2(\mu)$. Notice that from \eref{eqnarray4} we know $(\alpha-L)^{-\frac{1}{2}}\big(B(s,Y^{h}_{\nu,\lambda}(s))\circ h(s)\big)$ takes value in the space $L^2(\mu)$, so by \eref{eqn6} we have
\begin{eqnarray}\label{eq:2.4}
% \nonumber to remove numbering (before each equation)
&&\!\!\!\!\!\!\!\!_{F^*_{1,2}}\Big\langle(\alpha-L)^{-\frac{1}{2}}\big(B(s,Y^{h}_{\nu,\lambda}(s))\circ h(s)\big),(\alpha-L)^{-\frac{1}{2}}Y^{h}_{\nu,\lambda}(s)\Big\rangle_{F_{1,2}}\nonumber\\
=&&\!\!\!\!\!\!\!\!\Big\langle(\alpha-L)^{-\frac{1}{2}}\big(B(s,Y^{h}_{\nu,\lambda}(s))\circ h(s)\big),(\alpha-L)^{-\frac{1}{2}}Y^{h}_{\nu,\lambda}(s)\Big\rangle_2.
\end{eqnarray}
Multiplying both sides of \eref{eq:2.7} by $\alpha$, taking \eref{eq:2.6}-\eref{eq:2.4} into account, since $\sqrt{\alpha}(\alpha-L)^{-\frac{1}{2}}$ is a contraction on $L^2(\mu)$, by \textbf{(H2)(iii)}, \eref{eq:2.7} yields that for all $t\in[0,T]$,
\begin{eqnarray}\label{eq:2.8}
% \nonumber to remove numbering (before each equation)
&&\!\!\!\!\!\!\!\!\Big|\sqrt{\alpha}(\alpha-L)^{-\frac{1}{2}}Y^{h}_{\nu,\lambda}(t)\Big|_2^2+2\lambda \int_0^t\|\sqrt{\alpha}(\alpha-L)^{-\frac{1}{2}}Y^{h}_{\nu,\lambda}(s)\|^2_{F_{1,2}}ds\nonumber\\
\leq&&\!\!\!\!\!\!\!\!|\sqrt{\alpha}(\alpha-L)^{-\frac{1}{2}}x|_2^2+2\int_0^t\Big|\sqrt{\alpha}(\alpha-L)^{-\frac{1}{2}}\big(B(s,Y^{h}_{\nu,\lambda}(s))\circ h(s)\big)\Big|_2\cdot|\sqrt{\alpha}(\alpha-L)^{-\frac{1}{2}}Y^{h}_{\nu,\lambda}(s)|_2ds\nonumber\\
&&\!\!\!\!\!\!\!\!+2\int_0^t|(\alpha-L)^{-\frac{1}{2}}Y^{h}_{\nu,\lambda}(s)|^2_2ds\nonumber\\
\leq&&\!\!\!\!\!\!\!\!|\sqrt{\alpha}(\alpha-L)^{-\frac{1}{2}}x|_2^2+2\int_0^t|Y^{h}_{\nu,\lambda}(s)|^2_2ds\nonumber\\
&&\!\!\!\!\!\!\!\!+2\int_0^t\big\|\sqrt{\alpha}(\alpha-L)^{-\frac{1}{2}}B(s,Y^{h}_{\nu,\lambda}(s))\big\|_{L_2(L^2(\mu),L^2(\mu))}\cdot|h(s)|_2\cdot|\sqrt{\alpha}(\alpha-L)^{-\frac{1}{2}}Y^{h}_{\nu,\lambda}(s)|_2ds\nonumber\\
\leq&&\!\!\!\!\!\!\!\!|\sqrt{\alpha}(\alpha-L)^{-\frac{1}{2}}x|_2^2+2\int_0^t|Y^{h}_{\nu,\lambda}(s)|^2_2ds\nonumber\\
&&\!\!\!\!\!\!\!\!+2\int_0^tC_3(|Y^{h}_{\nu,\lambda}(s)|_{2}+1)\cdot|h(s)|_2\cdot|\sqrt{\alpha}(\alpha-L)^{-\frac{1}{2}}Y^{h}_{\nu,\lambda}(s)|_2ds.
\end{eqnarray}
By Young's inequality, we get
\begin{eqnarray*}
% \nonumber to remove numbering (before each equation)
&&\!\!\!\!\!\!\!\!\sup_{s\in[0,t]}\big|\sqrt{\alpha}(\alpha-L)^{-\frac{1}{2}}Y^{h}_{\nu,\lambda}(s)\big|_2^2+2\lambda\int_0^t\|\sqrt{\alpha}(\alpha-L)^{-\frac{1}{2}}Y^{h}_{\nu,\lambda}(s)\|^2_{F_{1,2}}ds\nonumber\\
\leq&&\!\!\!\!\!\!\!\!|\sqrt{\alpha}(\alpha-L)^{-\frac{1}{2}}x|_2^2+2\int_0^t|Y^{h}_{\nu,\lambda}(s)|^2_2ds\nonumber\\
&&\!\!\!\!\!\!\!\!+\frac{1}{2T}\int_0^t|\sqrt{\alpha}(\alpha-L)^{-\frac{1}{2}}Y^{h}_{\nu,\lambda}(s)|_2^2ds+2C_3^2T\int_0^t|h(s)|_2^2\cdot(|Y^{h}_{\nu,\lambda}(s)|^2_{2}+1)ds\nonumber\\
\leq&&\!\!\!\!\!\!\!\!|\sqrt{\alpha}(\alpha-L)^{-\frac{1}{2}}x|_2^2+\frac{1}{2}\sup_{s\in[0,t]}|\sqrt{\alpha}(\alpha-L)^{-\frac{1}{2}}Y^{h}_{\nu,\lambda}(s)|_2^2\nonumber\\
&&\!\!\!\!\!\!\!\!+\int_0^t\big(2C_3^2T|h(s)|_2^2+2\big)|Y^{h}_{\nu,\lambda}(s)|^2_{2}ds+2C_3^2T\int_0^t|h(s)|_2^2ds.
\end{eqnarray*}
Since $|\sqrt{\alpha}(\alpha-L)^{-\frac{1}{2}}\cdot|_2^2$ is equivalent to $\|\cdot\|_{F^*_{1,2}}$, by \eqref{eqna2.1} we have that for all $t\in[0,T]$,
\begin{eqnarray*}
% \nonumber to remove numbering (before each equation)
&&\!\!\!\!\!\!\!\!\sup_{s\in[0,t]}\big|\sqrt{\alpha}(\alpha-L)^{-\frac{1}{2}}Y^{h}_{\nu,\lambda}(s)\big|_2^2+4\lambda\int_0^t\|\sqrt{\alpha}(\alpha-L)^{-\frac{1}{2}}Y^{h}_{\nu,\lambda}(s)\|^2_{F_{1,2}}ds\nonumber\\
\leq&&\!\!\!\!\!\!\!\!2|x|_2^2+4\int_0^t\big(C_3^2T|h(s)|_2^2+1\big)|Y^{h}_{\nu,\lambda}(s)|^2_{2}ds+4C_3^2T\int_0^t|h(s)|_2^2ds,
\end{eqnarray*}
letting $\alpha\rightarrow\infty$, by \cite[(3.23)]{RWX2022}, \cite[(3.21)]{RWX2022} and Fatou's lemma, we have that for all $t\in[0,T]$,
\begin{eqnarray*}
% \nonumber to remove numbering (before each equation)
&&\!\!\!\!\!\!\!\!\sup_{s\in[0,t]}\big|Y^{h}_{\nu,\lambda}(s)\big|_2^2+4\lambda\int_0^t\|Y^{h}_{\nu,\lambda}(s)\|^2_{F_{1,2}}ds\nonumber\\
\leq&&\!\!\!\!\!\!\!\!\liminf_{\alpha\rightarrow\infty}\Big[\sup_{s\in[0,t]}\big|\sqrt{\alpha}(\alpha-L)^{-\frac{1}{2}}Y^{h}_{\nu,\lambda}(s)\big|_2^2+4\lambda\int_0^t\|\sqrt{\alpha}(\alpha-L)^{-\frac{1}{2}}Y^{h}_{\nu,\lambda}(s)\|^2_{F_{1,2}}ds\Big]\nonumber\\
\leq&&\!\!\!\!\!\!\!\!2|x|_2^2+4\int_0^t\big(C_3^2T|h(s)|_2^2+1\big)|Y^{h}_{\nu,\lambda}(s)|^2_{2}ds+4C_3^2T\int_0^t|h(s)|_2^2ds.
\end{eqnarray*}
Then by Gronwall's lemma, we know that for all $\nu, \lambda\in(0,1)$, $h\in\mathcal{S}_M$, $t\in[0,T]$,
\begin{eqnarray*}
% \nonumber to remove numbering (before each equation)
&&\!\!\!\!\!\!\!\!\sup_{s\in[0,t]}|Y^{h}_{\nu,\lambda}(s)|_2^2+4\lambda\int_0^t\|Y^{h}_{\nu,\lambda}(s)\|^2_{F_{1,2}}ds\nonumber\\
\leq&&\!\!\!\!\!\!\!\! \big(2|x|_2^2+4C_3^2T\int_0^t|h(s)|_2^2ds\big)\cdot e^{4\int_0^tC_3^2T|h(s)|_2^2+1ds}\nonumber\\
\leq&&\!\!\!\!\!\!\!\!\big(2|x|_2^2+4C_3^2T\cdot M\big)\cdot e^{4C_3^2TM+4T}\nonumber\\
:=&&\!\!\!\!\!\!\!\! C_{T,M}(|x|_2^2+1).
\end{eqnarray*}\hspace{\fill}$\Box$
\end{proof}

\begin{claim}\label{claim2}
$\{Y^{h}_{\nu,\lambda}\}_{\lambda\in(0,1)}$ converges to an element $Y^{h}_\nu\in L^2([0,T]; L^2(\mu))$ as $\lambda\rightarrow0$.
\end{claim}
\begin{proof}
By the chain rule, we get that for $\lambda, \lambda'\in(0,1)$ and $t\in [0,T]$,
\begin{eqnarray}\label{eqn1}
% \nonumber to remove numbering (before each equation)
&&\!\!\!\!\!\!\!\!\|Y^{h}_{\nu,\lambda}(t)-Y^{h}_{\nu,\lambda'}(t)\|^2_{F^*_{1,2,\nu}}\nonumber\\
&&\!\!\!\!\!\!\!\!+2\int_0^t\Big\langle(\nu-L)\big(\Psi(Y^{h}_{\nu,\lambda}(s))-\Psi(Y^{h}_{\nu,\lambda'}(s))+\lambda Y^{h}_{\nu,\lambda}(s)-\lambda'Y^{h}_{\nu,\lambda'}(s)\big)\Big\rangle_{F^*_{1,2,\nu}}ds\nonumber\\
=&&\!\!\!\!\!\!\!\!2\int_0^t\Big\langle Y^{h}_{\nu,\lambda}(s)-Y^{h}_{\nu,\lambda'}(s),\big(B(s,Y^{h}_{\nu,\lambda}(s))-B(s,Y^{h}_{\nu,\lambda'}(s))\big)\circ h(s)\Big\rangle_{F^*_{1,2,\nu}}ds.
\end{eqnarray}
To estimate the second term in the left-hand side of \eref{eqn1}, first by \eref{eqnarray5}, we know that
\begin{eqnarray}\label{eq0}
% \nonumber to remove numbering (before each equation)
&&\!\!\!\!\!\!\!\!\Big\langle(\nu-L)\big(\Psi(Y^{h}_{\nu,\lambda}(s))-\Psi(Y^{h}_{\nu,\lambda'}(s))+\lambda Y^{h}_{\nu,\lambda}(s)-\lambda'Y^{h}_{\nu,\lambda'}(s)\big)\Big\rangle_{F^*_{1,2,\nu}}\nonumber\\
=&&\!\!\!\!\!\!\!\!\Big\langle\Psi(Y^{h}_{\nu,\lambda}(s))-\Psi(Y^{h}_{\nu,\lambda'}(s))+\lambda Y^{h}_{\nu,\lambda}(s)-\lambda'Y^{h}_{\nu,\lambda'}(s),Y^{h}_{\nu,\lambda}(s)-Y^{h}_{\nu,\lambda'}(s)\Big\rangle_2,
\end{eqnarray}
besides,
\begin{eqnarray}\label{eqn1.1}
% \nonumber % Remove numbering (before each equation)
(\Psi(r)-\Psi(r'))(r-r')\geq\widetilde{\alpha}\cdot|\Psi(r)-\Psi(r')|^2,~\forall~r,~r'\in\Bbb{R},
\end{eqnarray}
where $\widetilde{\alpha}:=(Lip \Psi+1)^{-1}$, so we have
\begin{eqnarray}\label{eqn2}
% \nonumber to remove numbering (before each equation)
&&\!\!\!\!\!\!\!\!2\int_0^t\Big\langle\Psi(Y^{h}_{\nu,\lambda}(s))-\Psi(Y^{h}_{\nu,\lambda'}(s))+\lambda Y^{h}_{\nu,\lambda}(s)-\lambda'Y^{h}_{\nu,\lambda'}(s),Y^{h}_{\nu,\lambda}(s)-Y^{h}_{\nu,\lambda'}(s)\Big\rangle_2ds\nonumber\\
\geq&&\!\!\!\!\!\!\!\!2\widetilde{\alpha}\int_0^t|\Psi(Y^{h}_{\nu,\lambda}(s))-\Psi(Y^{h}_{\nu,\lambda'}(s))|_2^2ds\nonumber\\
&&\!\!\!\!\!\!\!\!+2\int_0^t\Big\langle\lambda Y^{h}_{\nu,\lambda}(s)-\lambda'Y^{h}_{\nu,\lambda'}(s),Y^{h}_{\nu,\lambda}(s)-Y^{h}_{\nu,\lambda'}(s)\Big\rangle_2ds.
\end{eqnarray}
For the right-hand side term of \eref{eqn1}, by \textbf{(H2)(i)} and Young's inequality, we know
\begin{eqnarray}\label{eqn3}
% \nonumber to remove numbering (before each equation)
&&\!\!\!\!\!\!\!\!2\int_0^t\Big\langle Y^{h}_{\nu,\lambda}(s)-Y^{h}_{\nu,\lambda'}(s),\big(B(s,Y^{h}_{\nu,\lambda}(s))-B(s,Y^{h}_{\nu,\lambda'}(s))\big)\circ h(s)\Big\rangle_{F^*_{1,2,\nu}}ds\nonumber\\
\leq&&\!\!\!\!\!\!\!\!2\int_0^t\|Y^{h}_{\nu,\lambda}(s)-Y^{h}_{\nu,\lambda'}(s)\|_{F^*_{1,2,\nu}}\cdot\|B(s,Y^{h}_{\nu,\lambda}(s))-B(s,Y^{h}_{\nu,\lambda'}(s))\|_{L_2(L^2(\mu),F^*_{1,2,\nu})}\cdot|h(s)|_2ds\nonumber\\
\leq&&\!\!\!\!\!\!\!\!\frac{1}{2}\sup_{s\in[0,t]}\|Y^{h}_{\nu,\lambda}(s)-Y^{h}_{\nu,\lambda'}(s)\|^2_{F^*_{1,2,\nu}}ds+2C_2^2\int_0^t\! T\cdot|h(s)|_2^2\cdot\|Y^{h}_{\nu,\lambda}(s)-Y^{h}_{\nu,\lambda'}(s)\|^2_{F^*_{1,2,\nu}}\!ds.
\end{eqnarray}
Plugging \eref{eqn2} and \eref{eqn3} into \eref{eqn1}, we get
\begin{eqnarray*}
% \nonumber to remove numbering (before each equation)
&&\!\!\!\!\!\!\!\!\sup_{s\in[0,t]}\|Y^{h}_{\nu,\lambda}(s)-Y^{h}_{\nu,\lambda'}(s)\|^2_{F^*_{1,2}}+2\widetilde{\alpha}\int_0^t|\Psi(Y^{h}_{\nu,\lambda}(s))-\Psi(Y^{h}_{\nu,\lambda'}(s))|_2^2ds\nonumber\\
\leq&&\!\!\!\!\!\!\!\!\frac{1}{2}\sup_{s\in[0,t]}\|Y^{h}_{\nu,\lambda}(s)-Y^{h}_{\nu,\lambda'}(s)\|^2_{F^*_{1,2,\nu}}+2C_2^2\int_0^t T\cdot|h(s)|^2_2\cdot\|Y^{h}_{\nu,\lambda}(s)-Y^{h}_{\nu,\lambda'}(s)\|^2_{F^*_{1,2,\nu}}ds\nonumber\\
&&\!\!\!\!\!\!\!\!+4(\lambda+\lambda')\int_0^t\big(|Y^{h}_{\nu,\lambda}(s)|_2^2+|Y^{h}_{\nu,\lambda'}(s)|_2^2\big)ds.
\end{eqnarray*}
By Claim 4.1 and Gronwall's lemma, we know that for some constant $C_{T,M}\in(0,\infty)$, which depends on $T$ and $M$, but is independent of $\lambda, \lambda'$, the following inequality holds, i.e.,
\begin{eqnarray*}
% \nonumber to remove numbering (before each equation)
&&\!\!\!\!\!\!\!\!\sup_{s\in[0,T]}\|Y^{h}_{\nu,\lambda}(s)-Y^{h}_{\nu,\lambda'}(s)\|^2_{F^*_{1,2}}+4\widetilde{\alpha}\int_0^T|\Psi(Y^{h}_{\nu,\lambda}(s))-\Psi(Y^{h}_{\nu,\lambda'}(s))|_2^2ds\nonumber\\
\leq&&\!\!\!\!\!\!\!\!e^{4C_2^2T\int_{0}^{T}|h(s)|_2^2ds}\cdot8(\lambda+\lambda')\cdot 2C_{T,M}(|x|_2^2+1)\nonumber\\
:=&&\!\!\!\!\!\!\!\!C_{T,M}(\lambda+\lambda')(|x|_2^2+1).
\end{eqnarray*}
Obviously, there exists an element $Y^{h}_\nu$ such that $\{Y^{h}_\nu(t)\}_{t\in [0,T]}$ converges to $Y^{h}_\nu$ in $C([0,T];F^*_{1,2})$. In addition, by Claim 4.1, we know that $Y^{h}_\nu\in L^2([0,T];L^2(\mu))$.\hspace{\fill}$\Box$
\end{proof}

\begin{claim}\label{claim3}
$Y^{h}_\nu$ satisfies \eref{eq:30}.
\end{claim}
\begin{proof}
From Claim 4.2, we know that
\begin{eqnarray}\label{eq01}
% \nonumber to remove numbering (before each equation)
Y^{h}_{\nu,\lambda}\rightarrow Y^{h}_\nu,~~\text{in}~~C([0,T];F^*_{1,2})~~\text{as}~\lambda\rightarrow0.
\end{eqnarray}
Since $h\in\mathcal{S}_M$, then by \textbf{(H2)(i)} it is easy to prove
\begin{eqnarray}\label{eq02}
% \nonumber to remove numbering (before each equation)
\int_0^\cdot B(s,Y^{h}_{\nu,h}(s))\circ h(s)ds\rightarrow\int_0^\cdot B(s,Y^{h}_\nu(s))\circ h(s)ds,~\text{as}~\lambda\rightarrow0,
\end{eqnarray}
in $C([0,T];F^*_{1,2})$.
From \eref{eq00}, we have
\begin{eqnarray}\label{eq03}
% \nonumber to remove numbering (before each equation)
&&\!\!\!\!\!\!\!\!(L-\nu)\int_0^\cdot(\Psi(Y^{h}_{\nu,\lambda}(s))+\lambda Y^{h}_{\nu,\lambda}(s))ds=-x +Y^{h}_{\nu,\lambda}(\cdot)-\int_0^\cdot B(s,Y^{h}_{\nu,\lambda}(s))\circ h(s)ds.
\end{eqnarray}
Then, by \eref{eq01}-\eref{eq03}, we know that
$$\int_0^\cdot\Psi(Y^{h}_{\nu,\lambda}(s))+\lambda Y^{h}_{\nu,\lambda}(s)ds$$
converges to some element in
$$C([0,T];F_{1,2}) ~~\text{as}~~ \lambda\rightarrow0.$$
On the other hand, by Claim 4.1 and Claim 4.2, we know that as $\lambda\rightarrow0$,
$$
\int_0^\cdot\Psi(Y^{h}_{\nu,\lambda}(s))+\lambda Y^{h}_{\nu,\lambda}(s)ds\rightarrow \int_0^\cdot\Psi(Y^{h}_\nu(s))ds
$$
in $L^2([0,T];L^2(\mu))$. This implies  Claim 4.3.\hspace{\fill}$\Box$
\end{proof}

By lower semi-continuity of the norms and Claim 4.1, we also know that \eref{equ:2.6} holds, i.e.,
\begin{eqnarray}\label{eq:32}
% \nonumber to remove numbering (before each equation)
\sup_{h\in\mathcal{S}_M}\sup_{s\in[0,T]}|Y^{h}_\nu(s)|_2^2\leq C_{T,M}(|x|_2^2+1).
\end{eqnarray}

\textbf{Uniqueness}

\vspace{2mm}
Assume that $Y^{h}_{\nu,1}$, $Y^{h}_{\nu,2}$ are two solutions to \eref{eq:30}, $\forall~ t\in[0,T]$, we have that
\begin{eqnarray}\label{eq:33}
% \nonumber to remove numbering (before each equation)
&&\!\!\!\!\!\!\!\!Y^{h}_{\nu,1}(t)-Y^{h}_{\nu,2}(t)+(\nu-L)\int_0^t\big(\Psi(Y^{h}_{\nu,1}(s))-\Psi(Y^{h}_{\nu,2}(s))\big)ds\nonumber\\
=&&\!\!\!\!\!\!\!\!\int_0^t\big(B(s,Y^{h}_{\nu,1}(s))-B(s,Y^{h}_{\nu,2}(s))\big)\circ h(s)ds.
\end{eqnarray}
Applying the chain rule to $\|Y^{h}_{\nu,1}(t)-Y^{h}_{\nu,2}(t)\|^2_{F^*_{1,2,\nu}}$, similarly to \eref{eqn1}, we know
\begin{eqnarray*}
% \nonumber to remove numbering (before each equation)
&&\!\!\!\!\!\!\!\!\|Y^{h}_{\nu,1}(t)-Y^{h}_{\nu,2}(t)\|^2_{F^*_{1,2,\nu}}+2\int_0^t\big\langle\Psi(Y^{h}_{\nu,1}(s))-\Psi(Y^{h}_{\nu,1}(s)),Y^{h}_{\nu,1}(s)-Y^{h}_{\nu,2}(s)\big\rangle_2ds\nonumber\\
=&&\!\!\!\!\!\!\!\!2\int_0^t\Big\langle \big(B(s,Y^{h}_{\nu,1}(s))-B(s,Y^{h}_{\nu,2}(s))\big)\circ h(s), Y^{h}_{\nu,1}(s)-Y^{h}_{\nu,2}(s)\Big\rangle_{F^*_{1,2,\nu}}ds.
\end{eqnarray*}
By \eref{eqn1.1} and \textbf{(H2)(i)}, we have
\begin{eqnarray*}
% \nonumber to remove numbering (before each equation)
&&\!\!\!\!\!\!\!\!\|Y^{h}_{\nu,1}(t)-Y^{h}_{\nu,2}(t)\|^2_{F^*_{1,2,\nu}}+2\tilde{\alpha}\int_0^t|\Psi(Y^{h}_{\nu,1}(s))-\Psi(Y^{h}_{\nu,2}(s))|^2_2ds\nonumber\\
\leq&&\!\!\!\!\!\!\!\! \int_0^t2C_1|h(s)|_2\cdot \|Y^{h}_{\nu,1}(s)-Y^{h}_{\nu,2}(s)\|^2_{F^*_{1,2,\nu}}ds.
\end{eqnarray*}
Since $h\in\mathcal{S}_M$, by Gronwall's lemma, we get $Y^{h}_{\nu,1}=Y^{h}_{\nu,2}$, which implies the uniqueness. This completes the proof of Lemma 4.2.\hspace{\fill}$\Box$
\end{proof}

Now let's continue the proof of Theorem 4.1. The idea is to prove that the sequence $\{Y^{h}_{\nu}\}_{\nu\in(0,1)}$ converges to the solution of \eref{eq:29} as $\nu\rightarrow0$.

\vspace{2mm}
\begin{proof}
Rewrite \eref{eq:30} as
\begin{eqnarray*}
% \nonumber to remove numbering (before each equation)
dY^{h}_\nu(t)+(1-L)\Psi(Y^{h}_\nu(t))dt=(1-\nu)\Psi(Y^{h}_\nu(t))dt+B(t,Y^{h}_\nu(t))\circ h(t)dt.
\end{eqnarray*}
Applying the chain rule to  $\frac{1}{2}\|Y^{h}_\nu(t)\|^2_{F^*_{1,2}}$, by \eref{eqn6} and \eref{eqn7}, we know that
\begin{eqnarray*}
% \nonumber to remove numbering (before each equation)
&&\!\!\!\!\!\!\!\!\frac{1}{2}\|Y^{h}_\nu(t)\|^2_{F^*_{1,2}}+\int_0^t\big\langle\Psi(Y^{h}_\nu(s)),Y^{h}_\nu(s)\big\rangle_2ds\nonumber\\
=&&\!\!\!\!\!\!\!\!\frac{1}{2}\|x\|^2_{F^*_{1,2}}+(1-\nu)\int_0^t\big\langle\Psi(Y^{h}_\nu(s)),Y^{h}_\nu(s)\big\rangle_{F^*_{1,2}}ds\nonumber\\
&&\!\!\!\!\!\!\!\!+\int_0^t\big\langle B(s,Y^{h}_\nu(s))\circ h(s),Y^{h}_\nu(s)\big\rangle_{F^*_{1,2}}ds.
\end{eqnarray*}
Since $\Psi(r)r\geq\tilde{\alpha}|\Psi(r)|^2$, $\forall~r\in\Bbb{R}$, $L^2(\mu)\subset F^*_{1,2}$ continuously and densely, by \textbf{(H2)(ii)} and Young's inequality, we get
\begin{eqnarray*}
% \nonumber to remove numbering (before each equation)
&&\!\!\!\!\!\!\!\!\frac{1}{2}\|Y^{h}_\nu(t)\|^2_{F^*_{1,2}}+\tilde{\alpha}\int_0^t|\Psi(Y^{h}_\nu(s))|_2^2ds\nonumber\\
\leq&&\!\!\!\!\!\!\!\!\frac{1}{2}\|x\|^2_{F^*_{1,2}}+\int_0^t\|\Psi(Y^{h}_\nu(s))\|_{F^*_{1,2}}\cdot\|Y^{h}_\nu(s)\|_{F^*_{1,2}}ds\nonumber\\
&&\!\!\!\!\!\!\!\!+\int_0^tC_1(\|Y^{h}_\nu(s)\|_{F^*_{1,2}}+1)\cdot|h(s)|_2\cdot\|Y^{h}_\nu(s)\|_{F^*_{1,2}}ds\nonumber\\
\leq&&\!\!\!\!\!\!\!\!\frac{1}{2}\|x\|^2_{F^*_{1,2}}+\frac{\tilde{\alpha}}{2}\int_0^t|\Psi(Y^{h}_\nu(s))|_2^2ds+\frac{1}{2\tilde{\alpha}}\int_0^t\|Y^{h}_\nu(s)\|^2_{F^*_{1,2}}ds\nonumber\\
&&\!\!\!\!\!\!\!\!+C_1\int_0^t(\|Y^{h}_\nu(s)\|_{F^*_{1,2}}+1)^2\cdot|h(s)|_2^2ds+\frac{1}{4C_1}\int_0^t\|Y^{h}_\nu(s)\|^2_{F^*_{1,2}}ds\nonumber\\
\leq&&\!\!\!\!\!\!\!\!\frac{1}{2}\|x\|^2_{F^*_{1,2}}+\frac{\tilde{\alpha}}{2}\int_0^t|\Psi(Y^{h}_\nu(s))|_2^2ds+\int_0^t(\frac{1}{2\tilde{\alpha}}+2C_1|h(s)|_2^2+\frac{1}{4C_1})\|Y^{h}_\nu(s)\|^2_{F^*_{1,2}}ds\nonumber\\
&&\!\!\!\!\!\!\!\!+2C_1\int_0^t|h(s)|_2^2ds,
\end{eqnarray*}
so by Gronwall's lemma and \eref{eq:32}, we know that
\begin{eqnarray}\label{eqn4}
% \nonumber to remove numbering (before each equation)
&&\!\!\!\!\!\!\!\!\|Y^{h}_\nu(t)\|^2_{F^*_{1,2}}+\tilde{\alpha}\int_0^t|\Psi(Y^{h}_\nu(s))|_2^2ds\nonumber\\
\leq&&\!\!\!\!\!\!\!\! \Big\{\|x\|^2_{F^*_{1,2}}+4C_1M\Big\}\cdot\exp\Big\{\int_0^T\big(\frac{1}{2\tilde{\alpha}}+2C_1|h(s)|_2^2+\frac{1}{4C_1}\big)ds\Big\} \nonumber\\
:=&&\!\!\!\!\!\!\!\!C_{T,M}(\|x\|^2_{F^*_{1,2}}+1).
\end{eqnarray}
In the following, we will prove the convergence of $\{Y^{h}_\nu\}_{\nu\in(0,1)}$. Applying the chain rule to $\|Y^{h}_\nu(t)-Y^{h}_{\nu'}(t)\|^2_{F^*_{1,2}}$, $\nu,\nu'\in(0,1)$, we have that for all $t\in[0,T]$,
\begin{eqnarray}\label{eq:34}
% \nonumber to remove numbering (before each equation)
&&\!\!\!\!\!\!\!\!\|Y^{h}_\nu(t)-Y^{h}_{\nu'}(t)\|^2_{F^*_{1,2}}+2\int_0^t\big\langle\Psi(Y^{h}_\nu(s))-\Psi(Y^{h}_{\nu'}(s)),Y^{h}_\nu(s)-Y^{h}_{\nu'}(s)\big\rangle_2ds\nonumber\\
=&&\!\!\!\!\!\!\!\!2\int_0^t\big\langle\Psi(Y^{h}_\nu(s))-\Psi(Y^{h}_{\nu'}(s)),Y^{h}_\nu(s)-Y^{h}_{\nu'}(s)\big\rangle_{F^*_{1,2}}ds\nonumber\\
&&\!\!\!\!\!\!\!\!-2\int_0^t\big\langle\nu\Psi(Y^{h}_\nu(s))-\nu'\Psi(Y^{h}_{\nu'}(s)),Y^{h}_\nu(s)-Y^{h}_{\nu'}(s)\big\rangle_{F^*_{1,2}}ds\nonumber\\
&&\!\!\!\!\!\!\!\!+2\int_0^t\big\langle\big(B(s,Y^{h}_\nu(s))-B(s,Y^{h}_{\nu'}(s))\big)\circ h(s),Y^{h}_\nu(s)-Y^{h}_{\nu'}(s)\rangle_{F^*_{1,2}}ds.
\end{eqnarray}
For the first term in the right hand-side of \eref{eq:34}, since $L^2(\mu)$ densely embedding into $F^*_{1,2}$, we have
\begin{eqnarray}\label{eqn10.1}
% \nonumber to remove numbering (before each equation)
&&\!\!\!\!\!\!\!\!2\int_0^t\langle \Psi(Y^h_\nu(s))-\Psi(Y^h_{\nu'}(s)),Y^h_\nu(s)-Y^h_{\nu'}(s)\rangle_{F^*_{1,2}}ds\nonumber\\
\leq&&\!\!\!\!\!\!\!\!2\int_0^t|\Psi(Y^h_\nu(s))-\Psi(Y^h_{\nu'}(s))|_2\cdot\|Y^h_\nu(s)-Y^h_{\nu'}(s)\|_{F^*_{1,2}}ds.
\end{eqnarray}
Similarly, the second term in the right hand-side of \eref{eq:34} can be dominated by
\begin{eqnarray}\label{eqn10}
% \nonumber to remove numbering (before each equation)
2\int_0^t\big(\nu|\Psi(Y^{h}_\nu(s))|_2+\nu'|\Psi(Y^{h}_{\nu'}(s))|_2\big)\cdot\|Y^{h}_\nu(s)-Y^{h}_{\nu'}(s)\|_{F^*_{1,2}}ds.
\end{eqnarray}
By \textbf{(H2)(i)}, the third term in the right hand-side of \eref{eq:34} can be dominated by
\begin{eqnarray}\label{eqn10.2}
% \nonumber to remove numbering (before each equation)
2C_1\int_0^t\|Y^{h}_{\nu}(s)-Y^{h}_{\nu'}(s)\|_{F^*_{1,2}}\cdot|h(s)|_2\cdot\|Y^{h}_{\nu}(s)-Y^{h}_{\nu'}(s)\|_{F^*_{1,2}}ds.
\end{eqnarray}
Plugging \eref{eqn10.1}-\eref{eqn10.2} into \eref{eq:34}, by \eref{eqn1.1} and Young's inequality, we get
\begin{eqnarray}\label{eq:35}
% \nonumber to remove numbering (before each equation)
&&\!\!\!\!\!\!\!\!\|Y^{h}_\nu(t)-Y^{h}_{\nu'}(t)\|^2_{F^*_{1,2}}+2\tilde{\alpha}\int_0^t|\Psi(Y^{h}_{\nu'}(s))-\Psi(Y^{h}_{\nu}(s))|_2^2ds\nonumber\\
\leq&&\!\!\!\!\!\!\!\!2\int_0^t|\Psi(Y^{h}_{\nu}(s))-\Psi(Y^{h}_{\nu'}(s))|_2\cdot\|Y^{h}_{\nu}(s)-Y^{h}_{\nu'}(s)\|_{F^*_{1,2}}ds\nonumber\\
&&\!\!\!\!\!\!\!\!+2\int_0^t\big(\nu|\Psi(Y^{h}_{\nu}(s))|_2+\nu'|\Psi(Y^{h}_{\nu'}(s))|_2\big)\cdot\|Y^{h}_{\nu}(s)-Y^{h}_{\nu'}(s)\|_{F^*_{1,2}}ds\nonumber\\
&&\!\!\!\!\!\!\!\!+2C_1\int_0^t\|Y^{h}_{\nu}(s)-Y^{h}_{\nu'}(s)\|_{F^*_{1,2}}\cdot|h(s)|_2\cdot\|Y^{h}_{\nu}(s)-Y^{h}_{\nu'}(s)\|_{F^*_{1,2}}ds\nonumber\\
\leq&&\!\!\!\!\!\!\!\!\tilde{\alpha}\int_0^t|\Psi(Y^{h}_{\nu'}(s))-\Psi(Y^{h}_{\nu}(s))|_2^2ds+\frac{1}{\tilde{\alpha}}\int_0^t\|Y^{h}_{\nu}(s)-Y^{h}_{\nu'}(s)\|^2_{F^*_{1,2}}ds\nonumber\\
&&\!\!\!\!\!\!\!\!+\int_0^t\|Y^{h}_{\nu}(s)-Y^{h}_{\nu'}(s)\|^2_{F^*_{1,2}}ds+2\int_0^t\nu^2|\Psi(Y^{h}_{\nu}(s))|_2^2+\nu'^2|\Psi(Y^{h}_{\nu'}(s))|_2^2ds\nonumber\\
&&\!\!\!\!\!\!\!\!+\int_0^t\|Y^{h}_{\nu}(s)-Y^{h}_{\nu'}(s)\|^2_{F^*_{1,2}}\cdot|h(s)|_2^2ds+C_1^2\int_0^t\|Y^{h}_{\nu}(s)-Y^{h}_{\nu'}(s)\|^2_{F^*_{1,2}}ds,
\end{eqnarray}
which yields
\begin{eqnarray*}
% \nonumber to remove numbering (before each equation)
&&\!\!\!\!\!\!\!\!\sup_{s\in[0,T]}\|Y^{h}_{\nu}(s)-Y^{h}_{\nu'}(s)\|^2_{F^*_{1,2}}+\tilde{\alpha}\int_0^t|\Psi(Y^{h}_{\nu}(s))-\Psi(Y^{h}_{\nu'}(s))|_2^2ds\nonumber\\
\leq&&\!\!\!\!\!\!\!\!\int_0^t(\frac{1}{\tilde{\alpha}}+1+|h(s)|_2^2+C_1^2)\|Y^{h}_{\nu}(s)-Y^{h}_{\nu'}(s)\|^2_{F^*_{1,2}}ds\nonumber\\
&&\!\!\!\!\!\!\!\!+2(\nu^2+\nu'^2)\int_0^t|\Psi(Y^{h}_{\nu}(s))|_2^2+|\Psi(Y^{h}_{\nu'}(s))|_2^2ds.
\end{eqnarray*}
Recall that if $x\in L^2(\mu)$, then we have \eref{eq:32}, if $x\in F^*_{1,2}$ and \eref{eq:2.2} is satisfied, then we have \eref{eqn4}. Hence, by Gronwall's inequality, we know that there exists a positive constant $C_{T,M}\in(0,\infty)$ which is independent of $\nu,~ \nu'$ such that
\begin{eqnarray}\label{eq:36}
% \nonumber to remove numbering (before each equation)
&&\!\!\!\!\!\!\!\!\sup_{s\in[0,T]}\|Y^{h}_{\nu}(s)-Y^{h}_{\nu'}(s)\|^2_{F^*_{1,2}}+\tilde{\alpha}\int_0^T|\Psi(Y^{h}_{\nu}(s))-\Psi(Y^{h}_{\nu'}(s))|_2^2ds\nonumber\\
\leq&&\!\!\!\!\!\!\!\!C_{T,M}(\nu^2+\nu'^2).
\end{eqnarray}
Hence, there exists a function $Y^h\in C([0,T];F^*_{1,2})$ such that $Y^{h}_\nu\rightarrow Y^{h}$ in $C([0,T];F^*_{1,2})$ as $\nu\rightarrow0$.

Next we will prove $Y^{h}$ satisfies \eref{eq:29}.
By
\begin{eqnarray*}\label{eq:37}
% \nonumber to remove numbering (before each equation)
Y^{h}_\nu\rightarrow Y^{h}~~\text{in}~~C([0,T];F^*_{1,2}),~~\text{as}~~\nu\rightarrow0,
\end{eqnarray*}
use an argument similar to that used in Claim 4.3, we have
\begin{eqnarray*}
% \nonumber to remove numbering (before each equation)
\int_0^\cdot B(s,Y^{h}_\nu(s))\circ h(s)ds\rightarrow \int_0^\cdot B(s,Y^{h}(s))\circ h(s)ds~~\text{in}~~C([0,T];F^*_{1,2}),~~\text{as}~~\nu\rightarrow0
\end{eqnarray*}
and
\begin{eqnarray*}
% \nonumber to remove numbering (before each equation)
\int_0^\cdot\Psi(Y^{h}_\nu(s))ds\rightarrow \int_0^\cdot\Psi(Y^{h}(s))ds~~\text{in}~~L^2([0,T];L^2(\mu)),~~\text{as}~~\nu\rightarrow0.
\end{eqnarray*}
Hence $Y^{h}$ satisfies \eref{eq:29}. This completes the proof of the existence in Theorem 4.1.
\vspace{2mm}

\textbf{Uniqueness}
\vspace{2mm}

Assume $Y^{h}_1$ and $Y^{h}_2$ are two solutions to \eref{eq:29}, we know that
\begin{eqnarray}\label{eq:38}
% \nonumber to remove numbering (before each equation)
&&\!\!\!\!\!\!\!\!Y^{h}_1(t)-Y^{h}_2(t)-L\int_0^t \Psi(Y^{h}_1(s))-\Psi(Y^{h}_2(s))ds\nonumber\\
=&&\!\!\!\!\!\!\!\!\int_0^t\big(B(s,Y^{h}_1(s))-B(s,Y^{h}_2(s))\big)\circ h(s)ds,~~\forall~t\in[0,T].
\end{eqnarray}
Rewrite \eref{eq:38} as
\begin{eqnarray}\label{eq:39}
% \nonumber to remove numbering (before each equation)
&&\!\!\!\!\!\!\!\!Y^{h}_1(t)-Y^{h}_2(t)+(1-L)\int_0^t \Psi(Y^{h}_1(s))-\Psi(Y^{h}_2(s))ds\nonumber\\
=&&\!\!\!\!\!\!\!\!\int_0^t\Psi(Y^{h}_1(s))-\Psi(Y^{h}_2(s))ds\nonumber\\
&&\!\!\!\!\!\!\!\!+\int_0^t\big(B(s,Y^{h}_1(s))-B(s,Y^{h}_2(s))\big)\circ h(s)ds,~\forall~ t\in[0,T].
\end{eqnarray}
Applying the chain rule to $\|Y^{h}_1(t)-Y^{h}_2(t)\|^2_{F^*_{1,2}}$, by \eref{eqn6} and \eref{eqn7}, we have that
\begin{eqnarray}\label{eq:40}
% \nonumber to remove numbering (before each equation)
&&\!\!\!\!\!\!\!\!\|Y^{h}_1(t)-Y^{h}_2(t)\|^2_{F^*_{1,2}}+2\int_0^t\big\langle\Psi(Y^{h}_1(s))-\Psi(Y^{h}_2(s))\big\rangle_2ds\nonumber\\
=&&\!\!\!\!\!\!\!\!2\int_0^t\big\langle\Psi(Y^{h}_1(s))-\Psi(Y^{h}_2(s)),Y^{h}_1(s)-Y^{h}_2(s)\big\rangle_{F^*_{1,2}}ds\nonumber\\
&&\!\!\!\!\!\!\!\!+2\int_0^t\big\langle\big(B(s,Y^{h}_1(s))-B(s,Y^{h}_2(s))\big)\circ h(s),Y^{h}_1(s)-Y^{h}_2(s)\big\rangle_{F^*_{1,2}}ds.
\end{eqnarray}
By \eref{eqn1.1} and \textbf{(H2)(i)}, we get
\begin{eqnarray*}
% \nonumber to remove numbering (before each equation)
&&\!\!\!\!\!\!\!\!\|Y^{h}_1(t)-Y^{h}_2(t)\|^2_{F^*_{1,2}}+2\tilde{\alpha}\int_0^t|\Psi(Y^{h}_1(s))-\Psi(Y^{h}_2(s))|_2^2ds\nonumber\\
\leq&&\!\!\!\!\!\!\!\!2\int_0^t\|\Psi(Y^{h}_1(s))-\Psi(Y^{h}_2(s))\|_{F^*_{1,2}}\cdot\|Y^{h}_1(s)-Y^{h}_2(s)\|_{F^*_{1,2}}ds\nonumber\\
&&\!\!\!\!\!\!\!\!+2C_1\int_0^t\|Y^{h}_1(s)-Y^{h}_2(s)\|_{F^*_{1,2}}\cdot|h(s)|_2\cdot\|Y^{h}_1(s)-Y^{h}_2(s)\|_{F^*_{1,2}}ds\textcolor{blue}{.}
\end{eqnarray*}
Since $L^2(\mu)\subset F^*_{1,2}$ continuously and densely, using Young's inequality, we obtain that
\begin{eqnarray*}
% \nonumber to remove numbering (before each equation)
&&\!\!\!\!\!\!\!\!\|Y^{h}_1(t)-Y^{h}_2(t)\|^2_{F^*_{1,2}}+2\tilde{\alpha}\int_0^t|\Psi(Y^{h}_1(s))-\Psi(Y^{h}_2(s))|_2^2ds\nonumber\\
\leq&&\!\!\!\!\!\!\!\!2\tilde{\alpha}\int_0^t|\Psi(Y^{h}_1(s))-\Psi(Y^{h}_2(s))|_2^2ds+\big(\frac{1}{2\tilde{\alpha}}+1\big)\int_0^t\|Y^{h}_1(s)-Y^{h}_2(s)\|^2_{F^*_{1,2}}ds\nonumber\\
&&\!\!\!\!\!\!\!\!+C_1^2\int_0^t|h(s)|_2^2\cdot\|Y^{h}_1(s)-Y^{h}_2(s)\|^2_{F^*_{1,2}}ds.
\end{eqnarray*}
Therefore,
$$\|Y^{h}_1(t)-Y^{h}_2(t)\|^2_{F^*_{1,2}}\leq\int_0^t(\frac{1}{2\tilde{\alpha}}+1+C_1^2|h(s)|_2^2)\cdot\|Y^{h}_1(s)-Y^{h}_2(s)\|^2_{F^*_{1,2}}ds.$$
Since $h\in\mathcal{S}_M$, by Gronwall's lemma, we get $Y^{h}_1=Y^{h}_2$. This completes the proof of Theorem 4.1. \hspace{\fill}$\Box$
\end{proof}

\section{Large deviations}
\setcounter{equation}{0}
 \setcounter{definition}{0}

This section is devoted to check \textbf{(a)} and \textbf{(b)} in the proof of Theorem \ref{Th 3.3}.
The verification of \textbf{(a)} will be given in Theorem \ref{Th 5.1}. \textbf{(b)} will
be established in Theorem \ref{Th 5.2}. Assuming these have been done, the
proof of Theorem \ref{Th 3.3} is complete.

\begin{theorem}\label{Th 5.1}
For every $M<\infty$, let $\{h_\varepsilon:\varepsilon>0\}\subset \mathcal{A}_M$. Then
$$\lim_{\varepsilon\rightarrow0}\mathbb{E}\Big\|\mathcal{G}^\varepsilon\big(W(\cdot)+\frac{1}{\sqrt{\varepsilon}}\int_0^\cdot h_\varepsilon(s)ds\big)-\mathcal{G}^0\big(\int_0^\cdot h_\varepsilon(s)ds\big)\Big\|_{C([0,T];F^*_{1,2})}^2=0.$$
\end{theorem}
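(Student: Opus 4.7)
The plan is to use the classical energy method (Itô's formula on the difference) together with Gronwall's lemma, mirroring the deterministic arguments already used in the proof of Theorem 4.1, with the extra stochastic term in the controlled equation handled via Burkholder-Davis-Gundy (BDG) and the Itô correction absorbed by \textbf{(H2)(ii)} together with an $\varepsilon$-uniform a priori estimate.

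First, write $X^\varepsilon := \mathcal G^\varepsilon(W(\cdot)+\frac1{\sqrt\varepsilon}\int_0^\cdot h_\varepsilon(s)\,ds)$ and $Y^\varepsilon := \mathcal G^0(\int_0^\cdot h_\varepsilon(s)\,ds) = Y^{h_\varepsilon}$. By Girsanov's theorem and the definition of $\mathcal G^\varepsilon$, $X^\varepsilon$ solves the controlled SPDE
\[
dX^\varepsilon(t)=L\Psi(X^\varepsilon(t))\,dt+B(t,X^\varepsilon(t))\circ h_\varepsilon(t)\,dt+\sqrt\varepsilon\, B(t,X^\varepsilon(t))\,dW(t),\quad X^\varepsilon(0)=x,
\]
which together with Theorem \ref{Th 3.2} (for $Y^{h_\varepsilon}$) gives a well-posed pair of equations in the Gelfand triple $L^2(\mu)\subset F^*_{1,2}\subset (L^2(\mu))^*$. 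I would first establish, by the same chain-rule/Gronwall argument that produced \eqref{eqnarray3} (now including a stochastic term handled by BDG, and using Assumption \textbf{(H2)(ii)} and $\int_0^T|h_\varepsilon(s)|_2^2\,ds\le M$), the uniform a priori bound
\[
\sup_{\varepsilon\in(0,1)}\mathbb E\Bigl[\sup_{t\in[0,T]}\|X^\varepsilon(t)\|_{F^*_{1,2}}^2+\int_0^T|X^\varepsilon(s)|_2^2\,ds\Bigr]\le C_{T,M}\bigl(1+|x|_2^2\bigr).
\]
This estimate is needed to kill the Itô correction term of order $\varepsilon$.

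Next, set $Z^\varepsilon:=X^\varepsilon-Y^\varepsilon$ and apply Itô's formula to $\|Z^\varepsilon(t)\|_{F^*_{1,2}}^2$ (legitimate in the Gelfand triple $L^2(\mu)\subset F^*_{1,2}\subset(L^2(\mu))^*$, as already used in Section 4). Using \eqref{eqn7} and the monotonicity of $\Psi$ (precisely via \eqref{eqn1.1}) together with the negativity of $L$, the drift term coming from $L(\Psi(X^\varepsilon)-\Psi(Y^\varepsilon))$ produces a nonpositive contribution (up to the same $(1-L)$-correction handled in \eqref{eq:40}); the drift term involving $B\circ h_\varepsilon$ is bounded, via \textbf{(H2)(i)} and Cauchy-Schwarz, by
\[
2C_1\int_0^t |h_\varepsilon(s)|_2\,\|Z^\varepsilon(s)\|_{F^*_{1,2}}^2\,ds;
\]
the stochastic integral is treated by BDG, giving after Young's inequality a contribution
\[
\tfrac12\mathbb E\sup_{s\le t}\|Z^\varepsilon(s)\|_{F^*_{1,2}}^2+C\varepsilon\,\mathbb E\int_0^t\|B(s,X^\varepsilon(s))\|_{L_2(L^2(\mu),F^*_{1,2})}^2\,ds;
\]
and the Itô correction $\varepsilon\mathbb E\int_0^t\|B(s,X^\varepsilon(s))\|_{L_2}^2\,ds$ is bounded by $C\varepsilon(1+\mathbb E\int_0^T\|X^\varepsilon(s)\|_{F^*_{1,2}}^2\,ds)\le C_{T,M}\varepsilon$ using \textbf{(H2)(ii)} and the a priori estimate above.

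Collecting the bounds and absorbing the $\tfrac12\mathbb E\sup$-term on the left, I arrive at an inequality of the form
\[
\mathbb E\sup_{s\le t}\|Z^\varepsilon(s)\|_{F^*_{1,2}}^2\le C_{T,M}\varepsilon+C\int_0^t\bigl(1+|h_\varepsilon(s)|_2\bigr)\,\mathbb E\sup_{r\le s}\|Z^\varepsilon(r)\|_{F^*_{1,2}}^2\,ds.
\]
Since $\int_0^T(1+|h_\varepsilon(s)|_2)\,ds\le T+\sqrt{MT}$ is uniformly bounded, Gronwall's lemma yields $\mathbb E\sup_{t\le T}\|Z^\varepsilon(t)\|_{F^*_{1,2}}^2\le C_{T,M}\varepsilon\,e^{C_{T,M}}\to 0$ as $\varepsilon\to 0$, which is exactly the claim. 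The main technical obstacle is the uniform-in-$\varepsilon$ a priori estimate for $X^\varepsilon$, because the drift $B\circ h_\varepsilon$ contains the random control $h_\varepsilon$ and one must be careful to use $\int_0^T|h_\varepsilon|_2^2\le M$ rather than a pointwise bound; the monotonicity/coercivity computations and BDG estimate then proceed as in Lemma 4.2 and Claim 4.1, with the factor $e^{MC_T}$ replacing $e^{CT}$.
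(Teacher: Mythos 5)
Your plan follows the same overall architecture as the paper's proof: apply Itô's formula to $\|X^{h_\varepsilon}-Y^{h_\varepsilon}\|_{F^*_{1,2}}^2$ in the Gelfand triple $L^2(\mu)\subset F^*_{1,2}\subset(L^2(\mu))^*$, use \eqref{eqn1.1} for the drift from $L\Psi$, the Lipschitz hypothesis \textbf{(H2)(i)} for the $B\circ h_\varepsilon$ term, BDG for the stochastic integral, a uniform a priori estimate $\sup_{\varepsilon\in(0,1)}\mathbb E\sup_{t\le T}\|X^{h_\varepsilon}(t)\|_{F^*_{1,2}}^2\le C_{T,M}$ to absorb the $O(\varepsilon)$ It\^o correction, and finally Gronwall. (Invoking Girsanov in place of the paper's Yamada--Watanabe argument to identify the shifted process as the solution of the controlled SPDE is a standard and acceptable alternative.)

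However, there is a genuine gap in the order of operations around Gronwall. Recall $\{h_\varepsilon\}\subset\mathcal A_M$: these are $\mathcal F_t$-predictable \emph{random} processes, so $|h_\varepsilon(s)|_2$ is a random variable. Your displayed inequality
\begin{equation*}
\mathbb E\sup_{s\le t}\|Z^\varepsilon(s)\|_{F^*_{1,2}}^2\le C_{T,M}\varepsilon+C\int_0^t\bigl(1+|h_\varepsilon(s)|_2\bigr)\,\mathbb E\sup_{r\le s}\|Z^\varepsilon(r)\|_{F^*_{1,2}}^2\,ds
\end{equation*}
is therefore ill-formed: the random factor $(1+|h_\varepsilon(s)|_2)$ cannot be pulled outside the expectation. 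You would instead face $\mathbb E\int_0^t|h_\varepsilon(s)|_2\,\|Z^\varepsilon(s)\|^2\,ds$, whose coupling between $h_\varepsilon$ and $Z^\varepsilon$ you have not controlled; a H\"older step would introduce fourth moments you did not estimate, and a pointwise bound on $|h_\varepsilon(s)|_2$ is unavailable (you correctly observe you only have $\int_0^T|h_\varepsilon(s)|_2^2\,ds\le M$ a.s.). The fix, and what the paper does, is to \emph{invert the order}: apply Gronwall's lemma \emph{pathwise}, before taking any expectations. For each fixed $\omega$, the a.s.\ bound $\int_0^T|h_\varepsilon(s)|_2^2\,ds\le M$ renders the Gronwall exponential a deterministic constant,
\begin{equation*}
\exp\Big\{\int_0^T\big(\tfrac1{2\widetilde\alpha}+C_1|h_\varepsilon(s)|_2^2+1\big)ds\Big\}\le C_{T,M},
\end{equation*}
and only \emph{then} does one take $\mathbb E$, apply BDG to $\sup_t$ of the stochastic integral, and absorb the resulting $\tfrac12\mathbb E\sup_t\|Z^\varepsilon\|^2$ term on the left. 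The same remark applies to your proposed a priori estimate for $X^{h_\varepsilon}$, which must also be obtained via pathwise Gronwall followed by expectation and BDG. With that reordering, your argument matches the paper's proof.
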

\begin{proof}
Recall the definition of $\mathcal{G}^0$; see (\ref{eq. g0}). We know that $\mathcal{G}^0\big(\int_0^\cdot h_\varepsilon(s)ds\big)$ is the strong solution to \eref{eq:3}:
\begin{equation}\label{eq:3}
\left\{
  \begin{array}{ll}
    dY^{h_\varepsilon}(t)-L\Psi(Y^{h_\varepsilon}(t))dt=B(t,Y^{h_\varepsilon}(t))\circ h_\varepsilon(t)dt, \\
    Y^{h_\varepsilon}(0)=x\in L^2(\mu).
  \end{array}
\right.
\end{equation}
Recall the definition of $\mathcal{G}^\varepsilon$; see (\ref{eq. g1}). According to Yamada-Watanabe theorem (cf.\cite{RSZ}) and Theorem \ref{Th 3.1}, $\mathcal{G}^\varepsilon\big(W(\cdot)+\frac{1}{\sqrt{\varepsilon}}\int_0^\cdot h_\varepsilon(s)ds\big)$
is the strong solution to \eref{eq:2}:
%Let $\{X^{h_\varepsilon}\}_{\varepsilon>0}$ be a family of solutions to the following equation
\begin{equation}\label{eq:2}
 \left\{
   \begin{array}{ll}
     dX^{h_\varepsilon}(t)-L\Psi(X^{h_\varepsilon}(t))dt=B(t,X^{h_\varepsilon}(t))\circ h_\varepsilon(t)dt+\sqrt{\varepsilon}B(t,X^{h_\varepsilon}(t))dW(t), \\
     X^{h_\varepsilon}(0)=x\in L^2(\mu).
   \end{array}
 \right.
\end{equation}
For simplicity, we denote
\begin{eqnarray}\label{eq:5.1}
% \nonumber to remove numbering (before each equation)
X^{h_\varepsilon}:=\mathcal{G}^\varepsilon\big(W(\cdot)+\frac{1}{\sqrt{\varepsilon}}\int_0^\cdot h_\varepsilon(s)ds\big) ~~\text{and}~~Y^{h_\varepsilon}:=\mathcal{G}^0\big(\int_0^\cdot h_\varepsilon(s)ds\big),
\end{eqnarray}
in addition, we have
\begin{eqnarray*}\label{eq:4}
X^{h_\varepsilon}(t)-Y^{h_\varepsilon}(t)=&&\!\!\!\!\!\!\!\!L\int_0^t(\Psi(X^{h_\varepsilon}(s))-\Psi(Y^{h_\varepsilon}(s)))ds\nonumber\\
&&\!\!\!\!\!\!\!\!+\int_0^t\big(B(s,X^{h_\varepsilon}(s))-B(s,Y^{h_\varepsilon}(s))\big)\circ h_\varepsilon(s)ds\nonumber\\
&&\!\!\!\!\!\!\!\!+\int_0^t\sqrt{\varepsilon} B(s,X^{h_\varepsilon}(s))dW(s).
\end{eqnarray*}
Rewrite the above equation in the following form
\begin{eqnarray*}\label{eq:5}
  &&\!\!\!\!\!\!\!\!\big(X^{h_\varepsilon}(t)-Y^{h_\varepsilon}(t)\big)+(1-L)\int_0^t\big(\Psi(X^{h_\varepsilon}(s))-\Psi(Y^{h_\varepsilon}(s))\big)ds\nonumber\\
=&&\!\!\!\!\!\!\!\!\int_0^t(\Psi(X^{h_\varepsilon}(s))-\Psi(Y^{h_\varepsilon}(s)))ds+\int_0^t\big(B(s,X^{h_\varepsilon}(s))-B(s,Y^{h_\varepsilon}(s))\big)\circ h_\varepsilon(s)ds\nonumber\\
  &&\!\!\!\!\!\!\!\!+\int_0^t\sqrt{\varepsilon} B(s,X^{h_\varepsilon}(s))dW(s).
\end{eqnarray*}
Applying the It\^{o} formula (\cite[Theorem 1.1]{LR1}) to $\|X^{h_\varepsilon}(t)-Y^{h_\varepsilon}(t)\|^2_{F^*_{1,2}}$, by \eref{eqn7}, we obtain
\begin{eqnarray*}\label{eq:6}
% \nonumber to remove numbering (before each equation)
&&\!\!\!\!\!\!\!\!\|X^{h_\varepsilon}(t)-Y^{h_\varepsilon}(t)\|^2_{F^*_{1,2}}+2\int_0^t\big\langle\Psi(X^{h_\varepsilon}(s))-\Psi(Y^{h_\varepsilon}(s)),X^{h_\varepsilon}(s)-Y^{h_\varepsilon}(s)\big\rangle_2ds\nonumber\\
=&&\!\!\!\!\!\!\!\!2\int_0^t\big\langle\Psi(X^{h_\varepsilon}(s))-\Psi(Y^{h_\varepsilon}(s)),X^{h_\varepsilon}(s)-Y^{h_\varepsilon}(s)\big\rangle_{F^*_{1,2}}ds\nonumber\\
&&\!\!\!\!\!\!\!\!+2\int_0^t\big\langle\big(B(s,X^{h_\varepsilon}(s))-B(s,Y^{h_\varepsilon}(s))\big)\circ h_\varepsilon(s),X^{h_\varepsilon}(s)-Y^{h_\varepsilon}(s)\big\rangle_{F^*_{1,2}}ds\nonumber\\
&&\!\!\!\!\!\!\!\!+2\int_0^t\big\langle X^{h_\varepsilon}(s)-Y^{h_\varepsilon}(s),\sqrt{\varepsilon} B(s,X^{h_\varepsilon}(s))dW(s)\big\rangle_{F^*_{1,2}}\nonumber\\
&&\!\!\!\!\!\!\!\!+\int_0^t\|\sqrt{\varepsilon}B(s,X^{h_\varepsilon}(s))\|^2_{L_2(L^2(\mu),F^*_{1,2})}ds.
\end{eqnarray*}
Since $L^2(\mu)\subset F^*_{1,2}$ continuously and densely, by \eref{eqn1.1}, Young's inequality and \textbf{(H2)}, we get
\begin{eqnarray*}\label{eq:7}
% \nonumber to remove numbering (before each equation)
&&\!\!\!\!\!\!\!\!\|X^{h_\varepsilon}(t)-Y^{h_\varepsilon}(t)\|^2_{F^*_{1,2}}+2\widetilde{\alpha}\int_0^t|\Psi(X^{h_\varepsilon}(s))-\Psi(Y^{h_\varepsilon}(s))|^2_2ds\nonumber\\
\leq&&\!\!\!\!\!\!\!\!2\widetilde{\alpha}\int_0^t|\Psi(X^{h_\varepsilon}(s))-\Psi(Y^{h_\varepsilon}(s))|_2^2ds+\frac{1}{2\widetilde{\alpha}}\int_0^t\|X^{h_\varepsilon}(s)-Y^{h_\varepsilon}(s)\|^2_{F^*_{1,2}}ds\nonumber\\
&&\!\!\!\!\!\!\!\!+\int_0^tC_1\|X^{h_\varepsilon}(s)-Y^{h_\varepsilon}(s)\|^2_{F^*_{1,2}}\cdot |h_\varepsilon(s)|^2_2ds\nonumber\\
&&\!\!\!\!\!\!\!\!+\int_0^t\|X^{h_\varepsilon}(s)-Y^{h_\varepsilon}(s)\|^2_{F^*_{1,2}}ds+2\Big|\int_0^t\big\langle X^{h_\varepsilon}(s)-Y^{h_\varepsilon}(s),\sqrt{\varepsilon} B(s,X^{h_\varepsilon}(s))dW(s)\big\rangle_{F^*_{1,2}}\Big|\nonumber\\
&&\!\!\!\!\!\!\!\!+2C_2\varepsilon\int_0^t(\|X^{h_\varepsilon}(s)\|^2_{F^*_{1,2}}+1)ds,
\end{eqnarray*}
this yields,
\begin{eqnarray*}\label{eq:8}
% \nonumber to remove numbering (before each equation)
&&\!\!\!\!\!\!\!\!\|X^{h_\varepsilon}(t)-Y^{h_\varepsilon}(t)\|^2_{F^*_{1,2}}\nonumber\\
\leq&&\!\!\!\!\!\!\!\!\int_0^t(\frac{1}{2\widetilde{\alpha}}+C_1|h_\varepsilon(s)|^2_2+1)\cdot \|X^{h_\varepsilon}(s)-Y^{h_\varepsilon}(s)\|^2_{F^*_{1,2}}ds\nonumber\\
&&\!\!\!\!\!\!\!\!+2\Big|\int_0^t\big\langle X^{h_\varepsilon}(s)-Y^{h_\varepsilon}(s),\sqrt{\varepsilon} B(s, X^{h_\varepsilon}(s))dW(s)\big\rangle_{F^*_{1,2}}\Big|\nonumber\\
&&\!\!\!\!\!\!\!\!+2\varepsilon C_2\int_0^t(\|X^{h_\varepsilon}(s)\|^2_{F^*_{1,2}}+1)ds.
\end{eqnarray*}
By Gronwall's lemma, we get
\begin{eqnarray*}\label{eq:9}
% \nonumber to remove numbering (before each equation)
&&\!\!\!\!\!\!\!\!\|X^{h_\varepsilon}(t)-Y^{h_\varepsilon}(t)\|^2_{F^*_{1,2}}\nonumber\\
\leq&&\!\!\!\!\!\!\!\!\Bigg[2\sup_{t\in [0,T]}\Big|\int_0^t\big\langle X^{h_\varepsilon}(s)-Y^{h_\varepsilon}(s),\sqrt{\varepsilon} B(s,X^{h_\varepsilon}(s))dW(s) \big\rangle_{F^*_{1,2}}\Big|\nonumber\\
&&\!\!\!\!\!\!\!\!+2\varepsilon C_2\int_0^T(\|X^{h_\varepsilon(s)}\|^2_{F^*_{1,2}}+1)ds\Bigg]\cdot \exp\Big\{\int_0^T\big(\frac{1}{2\widetilde{\alpha}}+C_1|h_\varepsilon(s)|^2_2+1\big)ds\Big\}.
\end{eqnarray*}
Since $\int_0^T|h_\varepsilon(s)|^2_2ds\leq M$, $\Bbb{P}$-a.s., then by BDG's inequality and \textbf{(H2)(ii)}, we know
\begin{eqnarray*}\label{eq:10}
% \nonumber to remove numbering (before each equation)
&&\!\!\!\!\!\!\!\!\Bbb{E}\Big[\sup_{t\in[0,T]}\|X^{h_\varepsilon}(t)-Y^{h_\varepsilon}(t)\|^2_{F^*_{1,2}}\Big]\nonumber\\
\leq&&\!\!\!\!\!\!\!\! C_{T,M}\Bigg[\Bbb{E}\sup_{t\in[0,T]}\Big|\int_0^t\big\langle X^{h_\varepsilon}(s)-Y^{h_\varepsilon}(s),\sqrt{\varepsilon} B(s,X^{h_\varepsilon}(s))dW(s)\big\rangle_{F^*_{1,2}}\Big|+\varepsilon\Bbb{E}\int_0^T(\|X^{h_\varepsilon}(s)\|^2_{F^*_{1,2}}+1)ds\Bigg]\nonumber\\
\leq&&\!\!\!\!\!\!\!\! C_{T,M}\Bbb{E}\Bigg[\int_0^T\|X^{h_\varepsilon}(s)-Y^{h_\varepsilon}(s)\|^2_{F^*_{1,2}}\cdot\|\sqrt{\varepsilon} B(s,X^{h_\varepsilon(s)})\|^2_{L_2(L^2(\mu),F^*_{1,2}))}ds\Bigg]^{\frac{1}{2}}\nonumber\\
&&\!\!\!\!\!\!\!\! +C_{T,M}\Bbb{E}\Big[\varepsilon\int_0^T(\|X^{h_\varepsilon}(s)\|^2_{F^*_{1,2}}+1)ds\Big]\nonumber\\
\leq&&\!\!\!\!\!\!\!\!C_{T,M}\Bbb{E}\Bigg[\sup_{t\in[0,T]}\|X^{h_\varepsilon}(t)-Y^{h_\varepsilon}(t)\|^2_{F^*_{1,2}}\cdot 2C_2\varepsilon\int_0^t(\|X^{h_\varepsilon}(s)\|^2_{F^*_{1,2}}+1)ds\Bigg]^{\frac{1}{2}}\nonumber\\
&&\!\!\!\!\!\!\!\!+C_{T,M}\Bbb{E}\Big[\varepsilon\int_0^T(\|X^{h_\varepsilon}(s)\|^2_{F^*_{1,2}}+1)ds\Big]\nonumber\\
=&&\!\!\!\!\!\!\!\!C_{T,M}\Bbb{E}\Bigg[\sup_{t\in[0,T]}\|X^{h_\varepsilon}(t)-Y^{h_\varepsilon}(t)\|_{F^*_{1,2}}\cdot\Big(2C_2\varepsilon\int_0^t(\|X^{h_\varepsilon}(s)\|^2_{F^*_{1,2}}+1)ds\Big)^{\frac{1}{2}}\Bigg]\nonumber\\
&&\!\!\!\!\!\!\!\!+C_{T,M}\Bbb{E}\Big[\varepsilon\int_0^T(\|X^{h_\varepsilon}(s)\|^2_{F^*_{1,2}}+1)ds\Big]\nonumber\\
\leq&&\!\!\!\!\!\!\!\! \frac{1}{2}\Bbb{E}\Bigg[\sup_{t\in[0,T]}\|X^{h_\varepsilon}(t)-Y^{h_\varepsilon}(t)\|^2_{F^*_{1,2}}\Bigg]+\varepsilon C_{T,M}\Bbb{E}\Bigg[\int_0^T(\|X^{h_\varepsilon}(s)\|^2_{F^*_{1,2}}+1)ds\Bigg],
\end{eqnarray*}
which yields
\begin{eqnarray}\label{eq:11}
% \nonumber to remove numbering (before each equation)
\Bbb{E}\Bigg[\sup_{t\in[0,T]}\|X^{h_\varepsilon}(t)-Y^{h_\varepsilon}(t)\|^2_{F^*_{1,2}}\Bigg]\leq \varepsilon C_{T,M}\Bbb{E}\Bigg[\int_0^T(\|X^{h_\varepsilon}(s)\|^2_{F^*_{1,2}}+1)ds\Bigg].
\end{eqnarray}
%To get the convergence of $X^{h_\varepsilon}\longrightarrow Y^{h_\varepsilon}$ in $L^2(\Omega,C([0,T];F^*_{1,2}))$ as $\varepsilon\rightarrow0$, we need the uniform boundedness of $\|X^{h_\varepsilon}(t)\|^2_{F^*_{1,2}}$, which can be obtained by
Applying the It\^o's formula to $\|X^{h_\varepsilon}(t)\|^2_{F^*_{1,2}}$, and using a similar argument as \eref{eq:11}, we can get, for any $\varepsilon\in(0,1)$,
\begin{eqnarray*}
% \nonumber to remove numbering (before each equation)
\Bbb{E}\Bigg[\sup_{t\in[0,T]}\|X^{h_\varepsilon}(t)\|^2_{F^*_{1,2}}\Bigg]\leq C_{T,M}\Bbb{E}\Bigg[\int_0^T(\|X^{h_\varepsilon}(s)\|^2_{F^*_{1,2}}+1)ds\Bigg],
\end{eqnarray*}
which implies that
\begin{eqnarray*}
% \nonumber to remove numbering (before each equation)
\sup_{\varepsilon\in(0,1)}\Bbb{E}\Bigg[\sup_{t\in[0,T]}\|X^{h_\varepsilon}(t)\|^2_{F^*_{1,2}}\Bigg]\leq C_{T,M}<\infty.
\end{eqnarray*}
Combining this with \eref{eq:11}, we get that $X^{h_\varepsilon}\longrightarrow Y^{h_\varepsilon}$ in $L^2(\Omega,C([0,T];F^*_{1,2}))$ as $\varepsilon\rightarrow0$, which implies \eref{eq:5.1} and consequently \textbf{(a)} holds.\hspace{\fill}$\Box$
\end{proof}

Let's continue to check \textbf{(b)}. Let $\{h, h_n\in\mathcal{S}_M, n\geq1\}$, by the definition of $\mathcal{G}^0$ (see (\ref{eq. g0})), we know that $X^h:=\mathcal{G}^0(\int_0^\cdot h(t)dt)$ is the unique strong solution to the following equation:
\begin{equation}\label{eq:4}
\left\{
  \begin{array}{ll}
    dX^{h}(t)-L\Psi(X^{h}(t))dt=B(t,X^{h}(t))\circ h(t)dt,\\
    X^{h}(0)=x\in L^2(\mu),
  \end{array}
\right.
\end{equation}
meanwhile, $X^{h_n}:=\mathcal{G}^0(\int_0^\cdot h_n(t)dt)$ is the unique strong solution to the following equation:
\begin{equation}\label{eq:4.1}
\left\{
  \begin{array}{ll}
    dX^{h_n}(t)-L\Psi(X^{h_n}(t))dt=B(t,X^{h_n}(t))\circ h_n(t)dt,\\
    X^{h_n}(0)=x\in L^2(\mu).
  \end{array}
\right.
\end{equation}

The proof of Theorem \ref{Th 5.2} is inspired by \cite{CM}.
\begin{theorem}\label{Th 5.2}
Assume that $h_n\rightarrow h$ weakly in $L^2([0,T];L^2(\mu))$ as $n\rightarrow\infty$. Then $\mathcal{G}^0(\int_0^\cdot h_n(t)dt)$ converges to $\mathcal{G}^0(\int_0^\cdot h(t)dt)$ in $C([0,T];F^*_{1,2})$ as $n\rightarrow\infty$.
\end{theorem}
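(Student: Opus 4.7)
The plan is to establish continuity of the solution map $h\mapsto X^h$ under weak convergence of $h$ in $L^2([0,T];L^2(\mu))$ via a chain-rule energy estimate on $\|X^{h_n}(t)-X^h(t)\|_{F^*_{1,2}}^2$, a compactness-based argument to handle the linear-in-$(h_n-h)$ term, and Gronwall's lemma. Since $\{h_n\}$ is weakly convergent, it is bounded, so without loss of generality $h_n, h\in\mathcal{S}_M$ for some $M<\infty$, and Theorem 4.1 supplies uniform estimates: $\sup_n\sup_{t\in[0,T]}|X^{h_n}(t)|_2^2\le 2|x|_2^2 e^{CT}$ and the analogous bound for $X^h$.

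Subtracting equation \eqref{eq:4.1} from \eqref{eq:4}, rewriting $-L=(1-L)-1$, and applying the chain rule for $\|\cdot\|_{F^*_{1,2}}^2$ (exactly as in the uniqueness proof of Theorem 4.1), together with the monotonicity identity $(\Psi(r)-\Psi(r'))(r-r')\ge\tilde\alpha|\Psi(r)-\Psi(r')|^2$, Young's inequality, (H2)(i), and the splitting
\[
B(\cdot,X^{h_n})\circ h_n-B(\cdot,X^h)\circ h=\bigl(B(\cdot,X^{h_n})-B(\cdot,X^h)\bigr)\circ h_n+B(\cdot,X^h)\circ(h_n-h),
\]
one reduces the estimate to
\[
\|X^{h_n}(t)-X^h(t)\|_{F^*_{1,2}}^2\le C\int_0^t\bigl(1+|h_n(s)|_2^2\bigr)\|X^{h_n}(s)-X^h(s)\|_{F^*_{1,2}}^2\,ds+|R_n(t)|,
\]
where
\[
R_n(t):=2\int_0^t\bigl\langle B(s,X^h(s))\circ(h_n(s)-h(s)),\,X^{h_n}(s)-X^h(s)\bigr\rangle_{F^*_{1,2}}\,ds.
\]

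The main obstacle is proving $\sup_{t\in[0,T]}|R_n(t)|\to 0$. The idea is to set $\zeta_n(t):=\int_0^t B(s,X^h(s))(h_n(s)-h(s))\,ds$ and show $\zeta_n\to 0$ in $C([0,T];F^*_{1,2})$. Since $B(s,X^h(s))$ is Hilbert-Schmidt (hence compact) for each $s$, and by (H2)(ii) combined with $X^h\in C([0,T];F^*_{1,2})$ the function $s\mapsto\|B(s,X^h(s))\|_{L_2(L^2(\mu),F^*_{1,2})}^2$ is integrable, one approximates $B(s,X^h(s))$ uniformly in $L^2([0,T];L_2)$ by truncations $B_N(s)$ supported on the first $N$ basis vectors of $L^2(\mu)$; the corresponding finite-rank integral operators $\phi\mapsto\int_0^\cdot B_N(s)\phi(s)\,ds$ are Hilbert-Schmidt (hence compact) from $L^2([0,T];L^2(\mu))$ into $C([0,T];F^*_{1,2})$ by the Arzel\`a-Ascoli argument applied component by component. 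Passing to the norm limit yields the compactness of $\phi\mapsto\int_0^\cdot B(s,X^h(s))\phi(s)\,ds$. Since $h_n-h\rightharpoonup 0$ weakly, compactness converts this into strong convergence $\sup_t\|\zeta_n(t)\|_{F^*_{1,2}}\to 0$.

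To bound $R_n$ by $\zeta_n$, we integrate by parts in the Stieltjes sense, using that $X^{h_n}-X^h$ has a time derivative (in $F^*_{1,2}$) given by $L[\Psi(X^{h_n})-\Psi(X^h)]+B(\cdot,X^{h_n})h_n-B(\cdot,X^h)h$ according to the equations. This yields
\[
R_n(t)=2\langle\zeta_n(t),X^{h_n}(t)-X^h(t)\rangle_{F^*_{1,2}}-2\int_0^t\bigl\langle\zeta_n(s),\,d(X^{h_n}-X^h)(s)\bigr\rangle_{F^*_{1,2}},
\]
where the boundary term is $O(\|\zeta_n(t)\|_{F^*_{1,2}})$ by the uniform boundedness of $X^{h_n}-X^h$ in $F^*_{1,2}$, and the integral term is bounded by $\sup_s\|\zeta_n(s)\|_{F^*_{1,2}}$ times the total variation of $X^{h_n}-X^h$ in $F^*_{1,2}$, which is controlled by the priori estimates and (H2)(ii). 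Hence $\sup_t|R_n(t)|\to 0$. Feeding this into the Gronwall inequality above, together with $\int_0^T|h_n(s)|_2^2\,ds\le M$, gives $\sup_{t\in[0,T]}\|X^{h_n}(t)-X^h(t)\|_{F^*_{1,2}}^2\to 0$, which is the desired convergence in $C([0,T];F^*_{1,2})$. The hardest step is the compactness/Hilbert--Schmidt approximation underlying $\zeta_n\to 0$; everything else is a Gronwall-style reduction.
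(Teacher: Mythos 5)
Your strategy is genuinely different from the paper's: you treat the remainder
\[
R_n(t)=2\int_0^t\bigl\langle B(s,X^h(s))\circ(h_n(s)-h(s)),\,U^n(s)\bigr\rangle_{F^*_{1,2}}\,ds,\qquad U^n:=X^{h_n}-X^h,
\]
globally by an integration by parts against the primitive $\zeta_n(\cdot)=\int_0^\cdot B(s,X^h(s))(h_n(s)-h(s))\,ds$, and then use compactness of the \emph{whole} time--integral operator to get $\zeta_n\to0$ in $C([0,T];F^*_{1,2})$. The paper instead freezes time on a dyadic partition, splits $R_n$ into five pieces ($\widetilde I_1,\dots,\widetilde I_5$), controls the discretization errors via time regularity of the solutions and the H\"older hypothesis \textbf{(H3)}, and applies the compactness of $B(t_k,X^h(t_k)):L^2(\mu)\to F^*_{1,2}$ only at each fixed dyadic node in $\widetilde I_5$. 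Your route avoids \textbf{(H3)} altogether, which is a real simplification. The first half of your argument (that $\zeta_n\to0$ uniformly) is correct and is most cleanly seen via equicontinuity of $\{\zeta_n\}$ (from $\|B(\cdot,X^h(\cdot))\|_{L_2}\in L^2(0,T)$ and $\sup_n\|h_n-h\|_{L^2}<\infty$) plus pointwise convergence $\zeta_n(t)\to0$ (the map $\phi\mapsto\int_0^t B(s,X^h(s))\phi(s)\,ds$ is Hilbert--Schmidt, hence compact, from $L^2([0,t];L^2(\mu))$ to $F^*_{1,2}$).

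There is, however, a concrete gap in the integration--by--parts step. You assert that $U^n$ has a time derivative ``in $F^*_{1,2}$'' and bound the integral term by $\sup_s\|\zeta_n(s)\|_{F^*_{1,2}}$ times the total variation of $U^n$ in $F^*_{1,2}$. This is not correct as stated: from the equation, $\dot U^n(s)=L[\Psi(X^{h_n}(s))-\Psi(X^h(s))]+[B(s,X^{h_n}(s))h_n(s)-B(s,X^h(s))h(s)]$, and the first summand lives in $(L^2(\mu))^*$, not in $F^*_{1,2}$ (indeed $(1-L)^{-1/2}Lu\in L^2(\mu)$ would force $u\in F_{1,2}$, but only $u\in L^2(\mu)$ is available). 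Hence the total variation of $U^n$ in $F^*_{1,2}$ is not controlled and may be infinite, and the pairing $\langle\zeta_n(s),\dot U^n(s)\rangle_{F^*_{1,2}}$ is not literally the $F^*_{1,2}$ inner product. The fix is to carry out the integration by parts in the Gelfand triple $L^2(\mu)\subset F^*_{1,2}\subset(L^2(\mu))^*$ and use the extended pairing $\mathfrak b(\eta,\xi):= {}_{(L^2(\mu))^*}\langle\xi,(1-L)^{-1}\eta\rangle_{L^2(\mu)}$ for $\eta\in F^*_{1,2}$, $\xi\in(L^2(\mu))^*$, which satisfies $|\mathfrak b(\eta,\xi)|\le\|\eta\|_{F^*_{1,2}}\|\xi\|_{(L^2(\mu))^*}$. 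With this one obtains
\[
\Bigl|\int_0^t\mathfrak b(\zeta_n(s),\dot U^n(s))\,ds\Bigr|\le\sup_{s}\|\zeta_n(s)\|_{F^*_{1,2}}\cdot\int_0^T\|\dot U^n(s)\|_{(L^2(\mu))^*}\,ds,
\]
and the last integral is indeed uniformly bounded (using $\|L\Psi(u)\|_{(L^2(\mu))^*}\le 2\,\mathrm{Lip}(\Psi)|u|_2$, \textbf{(H2)(ii)}, the $L^2(\mu)$ a priori bound from Theorem 4.1, and $h_n,h\in\mathcal S_M$). Once you replace the $F^*_{1,2}$--total--variation claim by this $(L^2(\mu))^*$--level estimate, the argument closes and the conclusion follows by Gronwall as you outline. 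So the approach is viable and arguably cleaner than the paper's, but the regularity bookkeeping for $\dot U^n$ needs to be done in $(L^2(\mu))^*$, not $F^*_{1,2}$.
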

\begin{proof}
Notice that
\begin{eqnarray*}\label{eq:13}
% \nonumber to remove numbering (before each equation)
&&\!\!\!\!\!\!\!\!d(X^{h_n}(t)-X^h(t))-L\big(\Psi(X^{h_n}(t))-\Psi(X^h(t))\big)dt\nonumber\\
=&&\!\!\!\!\!\!\!\!\big(B(t,X^{h_n}(t))\circ h_n(t)-B(t,X^h(t))\circ h(t)\big)dt.
\end{eqnarray*}
Rewrite the above equation as
\begin{eqnarray*}\label{eq:14}
% \nonumber to remove numbering (before each equation)
&&\!\!\!\!\!\!\!\!d\big(X^{h_n}(t)-X^h(t)\big)+(1-L)\big(\Psi(X^{h_n}(t))-\Psi(X^h(t))\big)dt\nonumber\\
=&&\!\!\!\!\!\!\!\!\big(\Psi(X^{h_n}(t))-\Psi(X^h(t))\big)dt+\big(B(t,X^{h_n}(t))\circ h_n(t)-B(t,X^h(t))\circ h(t)\big)dt.
\end{eqnarray*}
Applying the chain rule to $\|X^{h_n}-X^h\|^2_{F^*_{1,2}}$, by \eref{eqn7} we get
\begin{eqnarray*}\label{eq:15}
% \nonumber to remove numbering (before each equation)
&&\!\!\!\!\!\!\!\!\|X^{h_n}(t)-X^h(t)\|^2_{F^*_{1,2}}+2\int_0^t\big\langle\Psi(X^{h_n}(s))-\Psi(X^h(s)),X^{h_n}(s)-X^h(s)\big\rangle_2ds\nonumber\\
=&&\!\!\!\!\!\!\!\!2\int_0^t\big\langle\Psi(X^{h_n}(s))-\Psi(X^h(s)),X^{h_n}(s)-X^h(s)\big\rangle_{F^*_{1,2}}ds\nonumber\\
&&\!\!\!\!\!\!\!\!+2\int_0^t\big\langle B(s,X^{h_n}(s))\circ h_n(s)-B(s,X^h(s))\circ h(s),X^{h_n}(s)-X^h(s)\big\rangle_{F^*_{1,2}}ds\nonumber\\
=&&\!\!\!\!\!\!\!\!2\int_0^t\big\langle\Psi(X^{h_n}(s))-\Psi(X^h(s)),X^{h_n}(s)-X^h(s)\big\rangle_{F^*_{1,2}}ds\nonumber\\
&&\!\!\!\!\!\!\!\!+2\int_0^t\big\langle B(s,X^{h_n}(s))\circ h_n(s)-B(s,X^h(s))\circ h_n(s),X^{h_n}(s)-X^h(s)\big\rangle_{F^*_{1,2}}ds\nonumber\\
&&\!\!\!\!\!\!\!\!+2\int_0^t\big\langle B(s,X^h(s))\circ h_n(s)-B(s,X^h(s))\circ h(s),X^{h_n}(s)-X^h(s)\big\rangle_{F^*_{1,2}}ds.
\end{eqnarray*}
By \eref{eqn1.1}, since $L^2(\mu)\subset F^*_{1,2}$ continuously and densely, using \textbf{(H2)(i)} and Young's inequality, we obtain
\begin{eqnarray*}\label{eq:16}
% \nonumber to remove numbering (before each equation)
&&\!\!\!\!\!\!\!\!\|X^{h_n}(t)-X^h(t)\|^2_{F^*_{1,2}}+2\widetilde{\alpha}\int_0^t|\Psi(X^{h_n}(s))-\Psi(X^h(s))|_2^2ds\nonumber\\
\leq&&\!\!\!\!\!\!\!\! 2\int_0^t|\Psi(X^{h_n}(s))-\Psi(X^h(s))|_2\cdot\|X^{h_n}(s)-X^h(s)\|_{F^*_{1,2}}ds\nonumber\\
&&\!\!\!\!\!\!\!\!+2\int_0^t\|B(s,X^{h_n}(s))-B(s,X^h(s))\|_{L_2(L^2(\mu),F^*_{1,2})}\cdot|h_n(s)|_2\cdot\|X^{h_n}(s)-X^h(s)\|_{F^*_{1,2}}ds\nonumber\\
&&\!\!\!\!\!\!\!\!+2\int_0^t\big\langle B(s,X^h(s))\circ h_n(s)-B(s,X^h(s))\circ h(s),X^{h_n}(s)-X^h(s)\big\rangle_{F^*_{1,2}}ds\nonumber\\
\leq&&\!\!\!\!\!\!\!\! 2\widetilde{\alpha}\int_0^t|\Psi(X^{h_n}(s))-\Psi(X^h(s))|_2^2ds+\frac{1}{2\widetilde{\alpha}}\int_0^t\|X^{h_n}(s)-X^h(s)\|^2_{F^*_{1,2}}ds\nonumber\\
&&\!\!\!\!\!\!\!\! +2C_1\int_0^t\|X^{h_n}(s)-X^h(s)\|^2_{F^*_{1,2}}\cdot |h_n(s)|_2ds\nonumber\\
&&\!\!\!\!\!\!\!\!+2\int_0^t\big\langle B(s,X^h(s))\circ h_n(s)-B(s,X^h(s))\circ h(s),X^{h_n}(s)-X^h(s)\big\rangle_{F^*_{1,2}}ds.
\end{eqnarray*}

For simplicity, denote
$$
I_n(t):=2\int_0^t\big\langle B(s,X^h(s))\circ h_n(s)-B(s,X^h(s))\circ h(s),X^{h_n}(s)-X^h(s)\big\rangle_{F^*_{1,2}}ds.
$$
By Gronwall's inequality, we obtain
\begin{eqnarray}\label{eq:17}
% \nonumber to remove numbering (before each equation)
&&\!\!\!\!\!\!\!\!\sup_{t\in[0,T]}\|X^{h_n}(t)-X^h(t)\|^2_{F^*_{1,2}}\nonumber\\
\leq&&\!\!\!\!\!\!\!\! \sup_{t\in[0,T]}|I_n(t)|\cdot\exp\big\{\frac{1}{2\widetilde{\alpha}}T+2C_1\int_0^T|h_n(s)|_2ds\big\}.
\end{eqnarray}
Since
$$
2\int_0^T|h_n(s)|_2ds\leq2\big(\int_0^T|h_n(s)|_2^2ds\big)^{\frac{1}{2}}\cdot\big(\int_0^T1ds\big)^{\frac{1}{2}}=2M^{\frac{1}{2}}\cdot T^{\frac{1}{2}},
$$
\eref{eq:17} yields
\begin{eqnarray}\label{eqn1.2}
% \nonumber to remove numbering (before each equation)
&&\sup_{t\in[0,T]}\|X^{h_n}(t)-X^h(t)\|^2_{F^*_{1,2}}\leq C_{M,T}\cdot\sup_{t\in[0,T]}|I_n(t)|.
\end{eqnarray}
To estimate $|I_n(t)|$, we denote
$$
U^n(s)=X^{h_n}(s)-X^h(s),~~U^n(\overline{s}_m)=X^{h_n}(\overline{s}_m)-X^h(\overline{s}_m),
$$
where
\begin{eqnarray*}\label{eq:23}
% \nonumber to remove numbering (before each equation)
\overline{s}_m=t_{k+1}\equiv (k+1)T\cdot 2^{-m}~~\text{for}~~s\in [kT2^{-m},(k+1)T2^{-m}[,
\end{eqnarray*}
and
\begin{eqnarray}\label{eqnarray7}
\sup_{t\in [0,T]}|I_n(t)|\leq \sum_{i=1}^4 \widetilde{I}_i+\widetilde{I}_5,
\end{eqnarray}
where
\begin{eqnarray*}\label{eq:18}
% \nonumber to remove numbering (before each equation)
\widetilde{I}_1=\sup_{0\leq t\leq T}\Big|\int_0^t\big\langle B(s,X^h(s))\circ(h_n(s)-h(s)),U^n(s)-U^n(\overline{s}_m)\big\rangle_{F^*_{1,2}}ds\Big|,
\end{eqnarray*}

\begin{eqnarray*}\label{eq:19}
% \nonumber to remove numbering (before each equation)
\widetilde{I}_2=\sup_{0\leq t\leq T}\Big|\int_0^t\Big\langle\big(B(s,X^h(s))-B(\overline{s}_m,X^h(s))\big)\circ(h_n(s)-h(s)),U^n(\overline{s}_m)\Big\rangle_{F^*_{1,2}}ds\Big|,
\end{eqnarray*}

\begin{eqnarray*}\label{eq:20}
% \nonumber to remove numbering (before each equation)
\widetilde{I}_3=\sup_{0\leq t\leq T}\Big|\int_0^t\Big\langle\big(B(\overline{s}_m,X^h(s))-B(\overline{s}_m,X^h(\overline{s}_m))\big)\circ(h_n(s)-h(s)),U^n(\overline{s}_m)\Big\rangle_{F^*_{1,2}}ds\Big|,
\end{eqnarray*}

\begin{eqnarray*}\label{eq:21}
% \nonumber to remove numbering (before each equation)
\widetilde{I}_4=\sup_{1\leq k\leq 2^m}\sup_{t_{k-1}\leq t\leq t_k}\Big|\Big\langle(B(t_k,X^h(t_k))\times \int_{t_{k-1}}^t(h_n(s)-h(s))ds, U^n(t_k)\Big\rangle_{F^*_{1,2}}\Big|,
\end{eqnarray*}

\begin{eqnarray*}\label{eq:22}
% \nonumber to remove numbering (before each equation)
\widetilde{I}_5=\sum_{k=1}^{2^m}\Big|\Big\langle B(t_k,X^h(t_k))\cdot \int_{t_{k-1}}^{t_k}(h_n(s)-h(s))ds, U^n(t_k)\Big\rangle_{F^*_{1,2}}\Big|.
\end{eqnarray*}

Now, we estimate $\widetilde{I}_i$, $i=1,2,...,5$. Since
\begin{eqnarray}\label{eq:41}
% \nonumber to remove numbering (before each equation)
U^n(s)-U^n(\overline{s}_m)&&\!\!\!\!\!\!\!\!=X^{h_n}(s)-X^h(s)-\big(X^{h_n}(\overline{s}_m)-X^h(\overline{s}_m)\big)\nonumber\\
&&\!\!\!\!\!\!\!\!=X^{h_n}(s)-X^{h_n}(\overline{s}_m)-\big(X^h(s)-X^h(\overline{s}_m)\big),
\end{eqnarray}
by \eref{eq:41} and H\"{o}lder's inequality, we get
\begin{eqnarray}\label{eq:26}
% \nonumber to remove numbering (before each equation)
\widetilde{I}_1\leq&&\!\!\!\!\!\!\!\! C\int_0^T(\|X^h(s)\|_{F^*_{1,2}}+1)\cdot|h_n(s)-h(s)|_2\cdot\|U^n(s)-U^n(\overline{s}_m)\|_{F^*_{1,2}}ds\nonumber\\
\leq &&\!\!\!\!\!\!\!\!C\sup_{s\in [0,T]}(\|X^h(s)\|_{F^*_{1,2}}+1)\cdot \Big[\int_0^T\|X^{h_n}(s)-X^{h_n}(\overline{s}_m)\|_{F^*_{1,2}}\cdot |h_n(s)-h(s)|_2\nonumber\\
&&\!\!\!\!\!\!\!\!~~~~~~~~~~~~~~~~~~~~~~~~~~~~~~~~~~~~~~~~~+\|X^h(s)-X^h(\overline{s}_m)\|_{F^*_{1,2}}\cdot |h_n(s)-h(s)|_2ds\Big]\nonumber\\
\leq&&\!\!\!\!\!\!\!\!C\sup_{s\in[0,T]}(\|X^h(s)\|_{F^*_{1,2}}+1)\cdot \Big(\int_0^T|h_n(s)-h(s)|_2^2ds\Big)^{\frac{1}{2}}\cdot\nonumber\\
&&\!\!\!\!\!\!\!\!\Bigg\{\Big[\int_0^T\|X^{h_n}(s)-X^{h_n}(\overline{s}_m)\|^2_{F^*_{1,2}}ds\Big]^{\frac{1}{2}}+\Big[\int_0^T\|X^h(s)-X^h(\overline{s}_m)\|^2_{F^*_{1,2}}ds\Big]^{\frac{1}{2}}\Bigg\}.
\end{eqnarray}

To estimate $\int_0^T\|X^{h_n}(s)-X^{h_n}(\overline{s}_m)\|^2_{F^*_{1,2}}ds$, notice that from \eref{eq:4.1} we know
\begin{eqnarray}\label{eq:24}
% \nonumber to remove numbering (before each equation)
X^{h_n}(\overline{s}_m)-X^{h_n}(s)-\int_{s}^{\overline{s}_m}L\Psi(X^{h_n}(t))dt=\int_s^{\overline{s}_m} B(t,X^{h_n}(t))\circ h_n(t)dt.
\end{eqnarray}
Applying the chain rule to $\|X^{h_n}(\overline{s}_m)-X^{h_n}(s)\|^2_{F^*_{1,2}}$, we get
\begin{eqnarray}\label{eq:25}
% \nonumber to remove numbering (before each equation)
&&\!\!\!\!\!\!\!\!\|X^{h_n}(\overline{s}_m)-X^{h_n}(s)\|^2_{F^*_{1,2}}\nonumber\\
&&\!\!\!\!\!\!\!\!+2\int_s^{\overline{s}_m}~_{(L^2(\mu))^*}\big\langle(1-L)(\Psi(X^{h_n}(t))),X^{h_n}(t)-X^{h_n}(s)\big\rangle_{L^2(\mu)}dt\nonumber\\
=&&\!\!\!\!\!\!\!\!2\int_s^{\overline{s}_m}\big\langle\Psi(X^{h_n}(t)),X^{h_n}(t)-X^{h_n}(s)\big\rangle_{F^*_{1,2}} dt\nonumber\\
&&\!\!\!\!\!\!\!\!+2\int_s^{\overline{s}_m}\big\langle B(t,X^{h_n}(t))\circ h_n(t),X^{h_n}(t)-X^{h_n}(s)\big\rangle_{F^*_{1,2}} dt.
\end{eqnarray}
Integrating \eref{eq:25} over $[0,T]$ with respect to $s$, we obtain
\begin{eqnarray*}\label{eq:27}
% \nonumber to remove numbering (before each equation)
&&\!\!\!\!\!\!\!\!\int_0^T\|X^{h_n}(\overline{s}_m)-X^{h_n}(s)\|^2_{F^*_{1,2}}ds\nonumber\\
&&\!\!\!\!\!\!\!\!+2\int_0^T\int_s^{\overline{s}_m}~_{(L^2(\mu))^*}\big\langle(1-L)(\Psi(X^{h_n}(t))),X^{h_n}(t)-X^{h_n}(s)\big\rangle_{L^2(\mu)} dtds\nonumber\\
=&&\!\!\!\!\!\!\!\!2\int_0^T\int_s^{\overline{s}_m}\big\langle\Psi(X^{h_n}(t)),X^{h_n}(t)-X^{h_n}(s)\big\rangle_{F^*_{1,2}} dtds\nonumber\\
&&\!\!\!\!\!\!\!\!+2\int_0^T\int_s^{\overline{s}_m}\big\langle B(t,X^{h_n}(t))\circ h_n(t),X^{h_n}(t)-X^{h_n}(s)\big\rangle_{F^*_{1,2}} dtds.
\end{eqnarray*}
Rewrite the above the equation as
\begin{eqnarray}\label{eq:28}
% \nonumber to remove numbering (before each equation)
&&\!\!\!\!\!\!\!\!\int_0^T\|X^{h_n}(\overline{s}_m)-X^{h_n}(s)\|^2_{F^*_{1,2}}ds\nonumber\\
&&\!\!\!\!\!\!\!\!+2\int_0^T\int_s^{\overline{s}_m}~_{(L^2(\mu))^*}\big\langle(1-L)(\Psi(X^{h_n}(t))-\Psi(X^{h_n}(s))),X^{h_n}(t)-X^{h_n}(s)\big\rangle_{L^2(\mu)}dtds\nonumber\\
&&\!\!\!\!\!\!\!\!+2\int_0^T\int_s^{\overline{s}_m}~_{(L^2(\mu))^*}\big\langle(1-L)(\Psi(X^{h_n}(s))),X^{h_n}(t)-X^{h_n}(s)\big\rangle_{L^2(\mu)}dtds\nonumber\\
=&&\!\!\!\!\!\!\!\!2\int_0^T\int_s^{\overline{s}_m}\big\langle\Psi(X^{h_n}(t))-\Psi(X^{h_n}(s)),X^{h_n}(t)-X^{h_n}(s)\big\rangle_{F^*_{1,2}}dtds\nonumber\\
&&\!\!\!\!\!\!\!\!+2\int_0^T\int_s^{\overline{s}_m}\big\langle\Psi(X^{h_n}(s)),X^{h_n}(t)-X^{h_n}(s)\big\rangle_{F^*_{1,2}}dtds\nonumber\\
&&\!\!\!\!\!\!\!\!+2\int_0^T\int_s^{\overline{s}_m}\big\langle B(t,X^{h_n}(t))\circ h_n(t),X^{h_n}(t)-X^{h_n}(s)\big\rangle_{F^*_{1,2}} dtds.
\end{eqnarray}
By \eref{eqn7}, \eref{eqn1.1}, since $L^2(\mu)\subset F^*_{1,2}$ continuously and densely, by Young's inequality and \textbf{(H2)(ii)}, we obtain
\begin{eqnarray*}
% \nonumber to remove numbering (before each equation)
&&\!\!\!\!\!\!\!\!\int_0^T\|X^{h_n}(\overline{s}_m)-X^{h_n}(s)\|^2_{F^*_{1,2}}ds\nonumber\\
\leq&&\!\!\!\!\!\!\!\!2\int_0^T\int_s^{\overline{s}_m}|\Psi(X^{h_n}(s))|_2\cdot|X^{h_n}(t)-X^{h_n}(s)|_2dtds+\frac{1}{\tilde{\alpha}}\int_0^T\int_s^{\overline{s}_m}\|X^{h_n}(t)-X^{h_n}(s)\|^2_{F^*_{1,2}}dtds\nonumber\\
&&\!\!\!\!\!\!\!\!+2\int_0^T\int_s^{\overline{s}_m}|\Psi(X^{h_n}(s))|_2\cdot\|X^{h_n}(t)-X^{h_n}(s)\|_{F^*_{1,2}}dtds\nonumber\\
&&\!\!\!\!\!\!\!\!+2C_2\int_0^T\int_s^{\overline{s}_m}(\|X^{h_n}(t)\|_{F^*_{1,2}}+1)\cdot |h_n(t)|_2\cdot\|X^{h_n}(t)-X^{h_n}(s)\|_{F^*_{1,2}}dtds,
\end{eqnarray*}
by \textbf{(H1)} we know that $|\Psi(X^{h_n}(s))|_2\leq C|X^{h_n}(s)|_2$, combining this with \eref{eqnarray3} and $\int_0^T|h_n(s)|_2^2ds\leq M$, the right hand-side of \eref{29} can be controlled by
\begin{eqnarray}\label{29}
% \nonumber to remove numbering (before each equation)
\leq&&\!\!\!\!\!\!\!\!C_{T,M}\frac{1}{2^m}+C_{T,M}\int_0^T\int_s^{\overline{s}_m}|h_n(s)|_2dtds\nonumber\\
\leq&&\!\!\!\!\!\!\!\!C_{T,M}\frac{1}{2^m}+C_{T,M}\int_0^T\Big[\int_s^{\overline{s}_m}|h_n(s)|^2_2dt\Big]^{\frac{1}{2}}\cdot|\overline{s}_m-s|^{\frac{1}{2}}ds\nonumber\\
\leq&&\!\!\!\!\!\!\!\!C_{T,M}\sqrt{\frac{1}{2^m}},
\end{eqnarray}

similarly, we can get
\begin{eqnarray}\label{eq:42}
% \nonumber to remove numbering (before each equation)
\int_0^T\|X^h(\overline{s}_m)-X^h(s)\|^2_{F^*_{1,2}}ds\leq C_{T,M}\sqrt{\frac{1}{2^m}},
\end{eqnarray}
so applying \eref{eqnarray3}, $\int_0^T|h_n(s)|_2^2ds\leq M$ and $\int_0^T|h(s)|_2^2ds\leq M$ again, \eref{eq:26}, \eref{29} and \eref{eq:42} imply
\begin{eqnarray}\label{eqnarray8}
% \nonumber % Remove numbering (before each equation)
\widetilde{I_1}\leq C_{T,M}2^{-\frac{m}{4}}.
\end{eqnarray}

By assumption \textbf{(H2)(ii)}, \textbf{(H3)} and \eref{eqnarray3},  we know
\begin{eqnarray}\label{31}
% \nonumber to remove numbering (before each equation)
\widetilde{I_2}&&\!\!\!\!\!\!\!\!\leq\int_0^TC_2(1+\|X^h(s)\|_{F^*_{1,2}})\cdot |s-\overline{s}_m|^\gamma\cdot |h_n(s)-h(s)|_2\cdot\|U^n(\overline{s}_m)\|_{F^*_{1,2}}ds\nonumber\\
&&\!\!\!\!\!\!\!\!\leq C_{T,M}\cdot (\frac{T}{2^m})^\gamma\cdot\int_0^T|h_n(s)-h(s)|_2ds\nonumber\\
&&\!\!\!\!\!\!\!\!\leq C_{T,M} 2^{-m\gamma}.
\end{eqnarray}

By assumption \textbf{(H2)(i)}, \eref{eq:42} and \eref{eqnarray3} we have
\begin{eqnarray}\label{32}
% \nonumber to remove numbering (before each equation)
\widetilde{I_3}\leq&&\!\!\!\!\!\!\!\!\int_0^TC_1\|X^h(s)-X^h(\overline{s}_m)\|_{F^*_{1,2}}\cdot|h_n(s)-h(s)|_2\cdot\|U^n(\overline{s}_m)\|_{F^*_{1,2}}ds\nonumber\\
\leq&&\!\!\!\!\!\!\!\!C_1\sup_{s\in[0,T]}\|U^n(s)\|_{F^*_{1,2}}\cdot\Bigg[\Big(\int_0^T\|X^h(s)-X^h(\overline{s}_m)\|^2_{F^*_{1,2}}ds\Big)^{\frac{1}{2}}\cdot\Big(\int_0^T|h_n(s)-h(s)|_2^2ds\Big)^{\frac{1}{2}}\Bigg]\nonumber\\
\leq&&\!\!\!\!\!\!\!\! C_{T,M}2^{-\frac{m}{4}}.
\end{eqnarray}

For $\widetilde{I_4}$, by assumption \textbf{(H2)(ii)} we know
\begin{eqnarray}\label{33}
% \nonumber to remove numbering (before each equation)
\widetilde{I_4}\leq&&\!\!\!\!\!\!\!\! C_2\sup_{t\in [0,T]}\Big[\|U^n(s)\|_{F^*_{1,2}}\Big]\cdot\sup_{t\in[0,T]}\Big[\|X^h(s)\|_{F^*_{1,2}}+1\Big]\cdot\sup_{1\leq k\leq 2^m}\sup_{t_{k-1}\leq t\leq t_k}\int_{t_{k-1}}^t|h_n(s)-h(s)|_2ds\nonumber\\
\leq&&\!\!\!\!\!\!\!\!C_2\cdot \sqrt{\frac{T}{2^m}}\cdot \Big(\int_0^T|h_n(s)-h(s)|_2^2ds\Big)^{\frac{1}{2}}\nonumber\\
\leq&&\!\!\!\!\!\!\!\!C_{T,M}\cdot 2^{-\frac{m}{2}}.
\end{eqnarray}

Finally, notice that the weak convergence of $h_n$ to $h$ implies that for any $a,b\in[0,T]$, $a<b$, $\int_a^bh_n(s)ds$ converges to $\int_a^bh(s)ds$ in the weak topology of $L^2(\mu)$. Therefore, since the operator $B(t_k, X^h(t_k))$ is compact from $L^2(\mu)$ to $F^*_{1,2}$, we deduce that for every $k$,
$$\Big\|B(t_k,X^h(t_k))\times\Big(\int_{t_{k-1}}^{t_k}h_n(s)ds-\int_{t_{k-1}}^{t_k}h(s)ds\Big)\Big\|_{F^*_{1,2}}\rightarrow0,~~\text{as}~~n\rightarrow\infty.$$
Hence, for fixed $m$, $\widetilde{I_5}\rightarrow0$ as $n\rightarrow\infty$.

Now, taking \eref{eqnarray8}-\eref{33} into \eref{eqn1.2}, for arbitrary $m$, letting $n\rightarrow\infty$, we get
$$
\overline{\lim_{n\rightarrow\infty}}\sup_{t\in[0,T]}\|X^{h_n}(t)-X^h(t)\|^2_{F^*_{1,2}}\leq C_{T,M}(2^{-\frac{m}{2}}+2^{-\gamma m}+2^{-\frac{m}{4}}),
$$
so,
$$
\sup_{t\in[0,T]}\|X^{h_n}(t)-X^h(t)\|^2_{F^*_{1,2}}\rightarrow0,~~ \text{as}~~n\rightarrow\infty.
$$
Consequently, this completes the proof of Theorem 5.2.\hspace{\fill}$\Box$
\end{proof}

%Finally, from \cite{DWZZ}, we get the desired result of Theorem 3.1.

%%%%%%%%%%%%%%%%%%%%%%%%%%%%%%%%%%%%%%%%%%%%%%%%%%%%%%%%%%%%% the end of main theorem


\begin{thebibliography}{2}
\bibitem{A} D.G. Aronson, \emph{The porous medium equation}, Nonlinear diffusion problems (Montecatini Terme, 1985), 1-46, Lecture Notes in Math, 1224, Springer, Berlin, 1986.
\bibitem{BD} M. Boue, P. Dupuis, \emph{A variational representation for certain functionals of Brownian motion}, Ann. Probab. 26, 4(1998), 1641-1659.
\bibitem{BD1} A. Budhiraja, P. Dupuis, \emph{A variational representation for positive functionals of an infinite dimensional Brownian motion}, Probab. Math. Statist. 20, 1(2000), 39-61.
\bibitem{BDM1} A. Budhiraja, P. Dupuis, V. Maroulas, \emph{Large deviations for infinite dimensional stochastic dynamical systems}, Ann. Probab. 36 (2008), 1390-1420.
\bibitem{BDM2} A. Budhiraja, P. Dupuis, V. Maroulas, \emph{Variational representations for continuous time processes}, Annales de l'Institut Henri Poincar\'{e} (B) Probabilit\'{e}s Statistiques. 47(3)(2011), 725-747.
\bibitem{BDR} V. Barbu, G. Da Prato, M. R\"{o}ckner, \emph{Stochastic Porous Media Equations}, Springer international Publishing, Switzerland, 2016.
\bibitem{BGJ} Z. Brze\'{z}niak, B. Goldys, T. Jegaraj, \emph{Large Deviations and Transitions Between Equilibria for Stochastic Landau-Lifshitz-Gilbert Equation}, Arch. Rational Mech. Anal. 226 (2017), 497-558.
\bibitem{BRR} V. Barbu, M. R\"{o}ckner, F. Russo, \emph{Stochastic porous media equation in $\Bbb{R}^d$}, J. Math. Pures Appl. (9) 103 (4) (2015) 1024-1052.
\bibitem{CM} I. Chueshov, A. Millet, \emph{Stochastic 2D Hydrodynamical Type Systems: Well Posedness and Large Deviations}, Appl. Math. Optim. 61 (2010), 379-420.
\bibitem{CR} S. Cerrai, M. R\"ockner, \emph{Large deviations for stochastic reaction-diffusion systems with multiplicative noise and non-Lipschitz reaction term}, Ann. Probab. 32(2004), 1100-1139.
\bibitem{DWZZ} Z. Dong, J.L. Wu, R.R. Zhang, T.S. Zhang, \emph{Large deviation principles for first-order scalar conservation laws with stochastic forcing},  Ann. Appl. Probab. 30 (2020), 324-367.
\bibitem{FJS1}W. Farkas, N. Jacob, R. L. Schilling, \emph{Feller semigroups, $L^p$-sub-Markovian semigroups, and applications to pseudo-differential operators with negative definite symbols},  Forum Math. 13 (2001), no. 1, 51-90.
\bibitem{FJS2}W. Farkas, N. Jacob, R. L. Schilling, \emph{Function spaces related to continuous negative definite functions: $\Psi$-Bessel potential spaces}, Dissertationes Math. (Rozprawy Mat.) 393 (2001), 62 pp.
\bibitem{HK} H. Kaneko, \emph{On $(r,p)$-capacities for Markov processes}, Osaka J. Math., 23 (1986), 325-336.
\bibitem{L} W. Liu, \emph{Large deviations for stochastic evolution equations with small multiplicative noise}, App. Math. Optim. 61, 1(2010), 27-56.
\bibitem{LR} W. Liu, M. R\"{o}ckner, \emph{Stochastic partial differential equations: an introduction}, Springer International Publishing Switzerland, 2015.
\bibitem{LR1} W. Liu, M. R\"{o}ckner, \emph{SPDE in Hilbert space with locally monotone coefficients}, J. Funct. Anal., 259 (2010), 2902-2922.
\bibitem{MSZ} A. Matoussi, W. Sabbagh, T.S. Zhang, \emph{Large Deviation Principles of Obstacle Problems for Quasilinear Stochastic PDEs}, App. Math. Optim., https://doi.org/10.1007/s00245-019-09570-5.
\bibitem{P} E. Pardoux, \emph{Equations aux deriv'ees partielles stochastiques nonlin'eaires monotones}, Th\'{e}se, Universit'e Paris XI, 1975.
\bibitem{P67} K.R. Parthasarathy, \emph{Probability Measures on Metric Spaces}, Probability and Mathematical Statistics, No. 3 Academic Press, Inc., New York-London 1967 xi+276 pp.
\bibitem{PR} C. Pr\'{e}v\^{o}t, M. R\"ockner, \emph{A concise course on stochastic partial differential equations}, Lecture Notes in Mathematics, vol. 1905. Springer, Berlin (2007).
\bibitem{RRW} J. Ren, M. R\"{o}ckner, F.Y. Wang, \emph{Stochastic generalized porous media and fast diffusion equations}, J. Differential Equations 238(2007), no.1, 118-152.
\bibitem{RSZ} M. R\"{o}ckner, B. Schmuland, X.C. Zhang, \emph{Yamada-Watanabe theorem for stochastic evolution equations in infinite dimension}, Condens. Matter Phys. 11 (2008), no. 2(54), 247-259.
\bibitem{RW} M. R\"{o}ckner, F.Y. Wang, \emph{Non-monotone stochastic generalized porous media equations}, J. Differential Equations 245(2008), no.12, 3898-3935.
\bibitem{RWW} M. R\"{o}ckner, F.Y. Wang, L.M. Wu, \emph{Large deviations for stochastic generalized porous media equations}, Stochastic Process. App. 116 (2006), no. 12, 1677-1689.
\bibitem{RWX} M. R\"{o}ckner, W.N. Wu, Y.C. Xie, \emph{Stochastic porous media equation on general measure spaces with increasing Lipschitz nonlinearities},  Stochastic Process. Appl. 128 (2018), no. 6, 2131-2151.
\bibitem{RWX2022} M. R\"{o}ckner, W.N. Wu, Y.C. Xie, \emph{Stochastic porous media equation on general measure spaces with increasing Lipschitz nonlinearities}, https://doi.org/10.48550/arXiv.1606.03001.
\bibitem{RZ} M. Riedle, J.L. Zhai, \emph{Large Deviations for stochastic heat equations with memory driven by L\'{e}vy-type noise}, Discrete Contin. Dyn. Syst. 38 (2018), no. 4, 1983-2005.
\bibitem{RZZ} M. R\"{o}ckner, T.S. Zhang, X.C. Zhang, \emph{Large deviations for stochastic tamed 3D Navier-Stokes equations}, App. Math. Optim. 61, 2(2010), 267-285.
\bibitem{XZ}J. Xiong, J. Zhai, \emph{Large deviations for locally monotone stochastic partial differential equations driven by L\'{e}vy noise}. Bernoulli  24  (2018),  no. 4A, 2842-2874.


\end{thebibliography}
\end{document}